\title[]{Minimal $\W$-algebras of $\mathfrak{so}_N$ at level minus one}
\author[T. Creutzig]{Thomas Creutzig}
\address[T.C.]{Department Mathematik, FAU Erlangen–Nürnberg, Cauerstraße 11, 91058, Erlangen, Germany}
\email{creutzigt@math.fau.de}
\author[J.Fasquel]{Justine Fasquel}
\address[J.F.]{School of Mathematics and Statistics, University of Melbourne, Parkville, Australia, 3010}
\email{justine.fasquel@unimelb.edu.au}
\author[V. Kovalchuk]{Vladimir Kovalchuk}
\address[V.K.]{Department Mathematik, FAU Erlangen–Nürnberg, Cauerstraße 11, 91058, Erlangen, Germany}
\email{vladimir.kovalchuk@du.edu}
\author[A. R. Linshaw]{Andrew R. Linshaw}
\address[A.L.]{Department of Mathematics, University of Denver, C. M. Knudson Hall, 2390 S. York St. Denver, CO 80210}
\email{andrew.linshaw@du.edu}
\author[S.Nakatsuka]{Shigenori Nakatsuka}
\address[S.N.]{Department Mathematik, FAU Erlangen–Nürnberg, Cauerstraße 11, 91058, Erlangen, Germany}
\email{shigenori.nakatsuka@fau.de}
\definecolor{rouge}{rgb}{0.85,0.1,.4}
\definecolor{bleu}{rgb}{0.1,0.2,0.9}
\definecolor{violet}{rgb}{0.7,0,0.8}
\tikzset{>=latex} \usetikzlibrary{backgrounds} \usetikzlibrary{shapes.geometric}
\newtheorem{definition}{Definition}[section]
\newtheorem{proposition}[definition]{Proposition}
\newtheorem{theorem}[definition]{Theorem}
\newtheorem{corollary}[definition]{Corollary}
\newtheorem{lemma}[definition]{Lemma}
\theoremstyle{remark}
\numberwithin{equation}{section}
\newcommand{\Z}{\mathbb{Z}}     
\newcommand{\C}{\mathbb{C}}     
\newcommand{\sC}{\mathcal{C}}
\newcommand{\sSC}{\mathcal{SC}}
\newcommand{\bX}{\mathbb{X}}
\newcommand{\bY}{\mathbb{Y}}
\newcommand{\W}{\mathcal{W}}    
\newcommand{\scV}{\mathscr{V}}    
\newcommand{\HH}{\mathrm{H}}    
\newcommand{\OO}{\mathbb{O}}    
\newcommand{\minO}{\mathbb{O}_{\mathrm{min}}}
\newcommand{\lbr}[2]{ {[} {#1} {}_\lambda {#2} {]}}
\newcommand{\lm}{\lambda}
\newcommand{\pd}{\partial}
\newcommand{\End}{\operatorname{End}}
\newcommand{\Hom}{\operatorname{Hom}}
\renewcommand{\End}{\mathrm{End}}
\renewcommand{\min}{\mathrm{min}}
\newcommand{\g}{\mathfrak{g}}   
\newcommand{\h}{\mathfrak{h}}   
\newcommand{\sll}{\mathfrak{sl}}    
\newcommand{\so}{\mathfrak{so}}    
\newcommand{\gl}{\mathfrak{gl}}     
\newcommand{\osp}{\mathfrak{osp}}
\newcommand{\od}[1]{#1_{\overline{1}}}
\newcommand{\ev}[1]{#1_{\overline{0}}}
\newcommand{\Vir}{\mathrm{Vir}}     
\renewcommand{\epsilon}{\varepsilon}
\newcommand{\vac}{|0\rangle}
\DeclareMathOperator{\sdim}{sdim}
\DeclareMathOperator{\Frob}{Frob}
\newcommand{\ie}{\textit{ie.}}
\newcommand{\eg}{\textit{eg.}}
\newcommand{\ttwolines}[4]{\begin{tabular}{cl} {#1} & {#2} \\  {#3} & {#4}\end{tabular}}
\newcommand{\threelines}[6]{\begin{tabular}{cl} {#1} & {#2} \\  {#3} & {#4}\\  {#5} & {#6} \end{tabular}}
\newcommand{\ccases}[4]{\begin{cases} #1 &#2 \\ #3 & #4 \end{cases}}
\newcommand{\irr}[2]{\mathrm{Irr}_{#1}\left(#2 \right)}
\newcommand{\obj}[1]{\left\{ #1 \right\}}
\renewcommand{\mod}{\operatorname{-Mod}}
\newcommand{\ub}[1]{\underline{#1}}
\newcommand{\one}{\mathbf{1}}
\newcommand{\id}{\mathrm{id}}
\newcommand\doi[2]{\href{http://dx.doi.org/#1}{#2}}
\newcommand{\arxiv}[2]{arXiv: \href{https://arxiv.org/abs/#1}{#1} [#2]}
\newcommand{\red}[1]{\textcolor{red}{#1}}
\begin{document}
\begin{abstract}
For $N \in\mathbb Z_{\geq 7}$ we show that the simple minimal $\mathcal{W}$-algebra of $\mathfrak{so}_N$ at level minus one is isomorphic to the even subalgebra of the tensor product of the simple affine vertex superalgebra of $\mathfrak{osp}_{1|2}$ at level $\frac{N-6}{2}$ with $N-4$ free fermions. In particular when $N$ is even this minimal $\mathcal{W}$-algebra is strongly rational as conjectured by Arakawa-Moreau.
\end{abstract}

\maketitle

\section{Introduction}
Let $\g$ be a simple Lie algebra, $f$ a nilpotent element in $\g$ and $k \in \mathbb C$. To this data is associated a vertex algebra, $\W^k(\g, f)$, called the $\W$-algebra of $\g$ for the nilpotent element $f$ at level $k$. It is constructed from the affine vertex algebra of $\g$ at level $k$ via a quantum Hamiltonian reduction \cite{KW-QR}. Its simple quotient is denoted by $\W_k(\g, f)$.

Historically, vertex algebras that are strongly rational (\ie\ lisse and rational) have attracted a lot of interest. Indeed, they are the vertex algebras for two-dimensional conformal field theories, and their representation categories are modular tensor categories \cite{Hu}.
$\W$-algebras at certain admissible levels, called {exceptional}, provide a rich class of such strongly rational vertex algebras \cite{A,AvE,McR}. 
An admissible level $k$ is especially a rational number greater than the critical level. 
In general, one expects the representation theory of a simple $\W$-algebra at any rational level larger than the critical level to be interesting. 
Very rarely, one expects these $\W$-algebras to be strongly rational, and so far, only a few scattered examples have been proved based on coincidences; see for example \cite{ACK2,KL19,Kaw18}. 
Sometimes the simple $\W$-algebra is a conformal extension of an affine vertex algebra.  If the latter is strongly rational, then so is the former as strong rationality is preserved under vertex algebra extensions \cite{CMSY}. 
Such conformal extensions have been studied extensively recently, see \eg\ \cite{Ad1, Ad3, Ad2}. 
More generally, the simple $\W$-algebra can be a conformal extension of a strongly rational vertex algebra, usually of the tensor product of a strongly rational affine vertex algebra and a strongly rational (principal) $\W$-algebra.

In \cite[Theorem 7.1]{AM15}, it is proved that the minimal $\W$-algebra $\W_{k}(\so_{2n},\OO_{\min})$ ($n\geq2$) is lisse if and only if $k\in\Z_{\geq-2}$ and it was conjectured it is rational at the non-admissible levels $k=-1,-2$.
The case $n=4$ has been proved in \cite{Kaw18} (for $k=-1$) and \cite{Ad2} (for $k=-2$) as particular cases of families of special isomorphisms related to the Deligne exceptional series: for 
$\mathfrak g \in  \{\mathfrak{sl}_2, \mathfrak{sl}_3, \mathfrak{g}_2, \mathfrak{so}_8, \mathfrak{f}_4, \mathfrak{e}_6, \mathfrak{e}_7, \mathfrak{e}_8\}$, 
\begin{itemize}
    \item $\W_{-h^\vee/6 -1 }(\mathfrak g, \OO_{\min}) \simeq \mathbb C$ is trivially strongly rational \cite{Ad2},
    \item $\W_{-h^\vee/6 }(\mathfrak g, \OO_{\min})$  is a simple current extension of the tensor product of a strongly rational affine vertex algebra and a strongly rational Virasoro vertex algebra \cite{Kaw18}.
\end{itemize}
These results have also been generalized to principal $\W$-algebras \cite{ACK2}.
More recently, it has been established that $\W_{-2}(\mathfrak{so}_n, \OO_{\min}) \simeq L_{-2 + \frac{n-4}{2}}(\mathfrak{sl}_2)$ \cite{A+7}, implying strong rationality for $n \in 2\mathbb Z_{\geq 8}$.
This proves in particular the rationality conjecture of \cite{AM15} for the level $-2$.
In this paper, we prove the conjecture of Arakawa and Moreau of \cite{AM15} for the case $k=-1$ for arbitrary $n\geq4$ using the following orbifold realization. 
\begin{theorem}\label{coset construction}
For $N\in\Z_{\geq7}$, there exists an isomorphism of vertex algebras
    \begin{align*}
        \W_{-1}(\so_{N},\minO)\simeq (L_{-1+\frac{N-4}{2}}(\osp_{1|2})\otimes \mathcal{F}^{N-4})^{\Z_2}.
    \end{align*}
\end{theorem}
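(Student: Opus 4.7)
The plan is to construct a vertex algebra homomorphism from the universal minimal $\W$-algebra $\W^{-1}(\so_N,\minO)$ into $V^{(N-6)/2}(\osp_{1|2}) \otimes \F^{N-4}$, show that its image lies in the $\Z_2$-fixed subalgebra under $(-1)^F$, and then descend to simple quotients on both sides to obtain the isomorphism.

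First, I would identify strong generating sets on both sides. By Kac--Roan--Wakimoto, $\W^k(\so_N, \minO)$ is strongly generated by a Virasoro field, affine currents for the Levi $\sll_2 \oplus \so_{N-4}$ (at appropriately shifted levels), and $2(N-4)$ primary superfields of conformal weight $3/2$ spanning the representation $\mathbf{2} \boxtimes \mathbf{N-4}$ of the Levi. On the right-hand side, the $\Z_2$-orbifold of $L_{(N-6)/2}(\osp_{1|2}) \otimes \F^{N-4}$ under $(-1)^F$ is strongly generated by the affine $\sll_2$ inside $\osp_{1|2}$, the fermion bilinears $\psi^\alpha \psi^\beta$ providing $L_1(\so_{N-4})$, and the ``mixed'' bilinears $F^\pm \otimes \psi^\alpha$, where $F^\pm$ are the odd $\osp_{1|2}$ generators, giving exactly $2(N-4)$ fields of total conformal weight $1 + \tfrac{1}{2} = \tfrac{3}{2}$. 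The strong-generator counts match in each weight, and the target is a bosonic vertex algebra in the correct sense.

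Next, I would define $\Phi$ by sending the Virasoro to the total (Sugawara plus fermion) Virasoro on the right, the Levi currents to (possibly ghost-corrected) combinations of affine currents and fermion bilinears, and each weight-$3/2$ primary $G^{i,\alpha}$ to $F^i \otimes \psi^\alpha$, with normalizations fixed by the OPE structure. The central technical step is verifying that the OPEs close identically on both sides. The affine OPEs among Levi currents pin down the embedding levels and dictate any quadratic corrections required so that the KRW-shifted levels at $k=-1$ agree with those obtained from $V^{(N-6)/2}(\osp_{1|2})$ and $\F^{N-4}$. The OPEs of two weight-$3/2$ generators produce combinations of Virasoro and Levi currents whose structure constants are determined by the representation theory of the Levi and the central charge; these must match the corresponding OPEs on the right, computed via Wick's theorem and the standard $\osp_{1|2}$ OPEs.

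Once $\Phi$ is shown to be a well-defined homomorphism, the image is manifestly $\Z_2$-invariant by construction, so it lands in the orbifold. Passing to the simple quotient, simplicity of $\W_{-1}(\so_N, \minO)$ yields injectivity, while for surjectivity the image already contains all the strong generators of the orbifold listed above and so equals it. The main obstacle is the OPE verification for the weight-$3/2$ generators, together with the delicate level matching at the collapsing level $k=-1$: the KRW-shifted Levi levels differ from the naive embedded levels on the right, and reconciling the two typically forces the Levi currents of the $\W$-algebra to be realized as nontrivial modifications of the bare embedded currents by fermion bilinears or other quadratic corrections. An attractive alternative would be to invoke a recently developed free-field realization of $\W^k(\so_N, \minO)$ that produces the embedding into $V^k(\osp_{1|2}) \otimes \F^{N-4}$ for generic $k$, and then specialize to $k=-1$, where the collapsing of the image onto the orbifold becomes transparent.
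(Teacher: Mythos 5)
Your strategy (build an explicit homomorphism from the universal $\W^{-1}(\so_N,\minO)$ into the target and match all OPEs) is a genuinely different route from the paper's, which instead invokes the recognition theorem of Arakawa--Creutzig--Kawasetsu--Linshaw (Theorem \ref{uniqueness of minimal Walg}): any \emph{simple} vertex algebra strongly generated by fields with the structure (M1)--(M3) of a minimal $\W$-algebra, with $a_0(k)\neq 0$ and nondegenerate weight-$\tfrac32$ pairing, \emph{is} $\W_k(\g,\minO)$. The whole point of that theorem is that the $[G^{\{u\}}{}_\lambda G^{\{v\}}]$ brackets are then forced, so the paper only has to check central charges, the levels $k_1^\sharp=-1+\tfrac{r}{2}$, $k_2^\sharp=1$, the $\sll_2\oplus\so_r$-module structure of the weight-$\tfrac32$ space, and $a_0(-1)=-3(N-3)\neq 0$. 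Your route front-loads exactly the computation this theorem is designed to avoid, and you correctly identify it as the main obstacle but do not carry it out; as written, the proposal therefore has no proof of its central step. A second unproved ingredient is surjectivity: you assert that the $\Z_2$-orbifold is strongly generated by the bilinears $e,f,h$, $\psi_i\psi_j$, $x\psi_i$, $y\psi_i$, $xy$, but for orbifolds of free-field-type algebras this is not automatic and is the content of the paper's Lemma \ref{lem:strong_gen}, which requires explicit normally-ordered-product identities to eliminate the higher-derivative bilinears.

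Two further points. First, your worry that the Levi currents need ``ghost-corrected'' or quadratically modified realizations is unfounded here: the fermion bilinears give $\so_{N-4}$ at level exactly $1=k+2|_{k=-1}$ and the $\osp_{1|2}$ even part gives $\sll_2$ at level exactly $k_1^\sharp$, with no corrections. Second, the ``attractive alternative'' of a generic-$k$ free-field realization $\W^k(\so_N,\minO)\hookrightarrow V^k(\osp_{1|2})\otimes\F^{N-4}$ cannot exist: the $\so_{N-4}$ subalgebra of $\W^k(\so_N,\minO)$ sits at level $k+2$, whereas $\F^{N-4}$ only ever produces $\so_{N-4}$ at level $1$ (and the central charges likewise only match at $k=-1$), so the coincidence is rigidly tied to the single level $k=-1$ and cannot be obtained by specializing a generic construction of this shape.
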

In particular, when $N$ is even, \ie\ $D$-type case, $L_{-1+\frac{N-4}{2}}(\osp_{1|2})$ is strongly rational (see for instance \cite{FZ92}), and so is the minimal $\W$-algebra.
\begin{corollary}
For, $n\geq4$, $\W_{-1}(\so_{2n},\minO)$ is strongly rational.
\end{corollary}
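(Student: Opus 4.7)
The corollary is deduced from Theorem \ref{coset construction} by combining standard preservation properties of strong rationality. Specializing the theorem to $N = 2n$ with $n \geq 4$ yields
\[
\W_{-1}(\so_{2n}, \minO) \;\simeq\; \bigl(L_{n-3}(\osp_{1|2}) \otimes \F^{2n-4}\bigr)^{\Z_2},
\]
where the superscript denotes the even (\ie\ parity) subalgebra, in accordance with the abstract. The plan is then to verify strong rationality of each tensor factor, conclude strong rationality of the tensor product, and finally pass to $\Z_2$-invariants.

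For the first factor, the level $n - 3$ is a positive integer (since $n \geq 4$), so $L_{n-3}(\osp_{1|2})$ is the simple affine vertex superalgebra of $\osp_{1|2}$ at a positive integer level. Strong rationality in this range is the $\osp_{1|2}$ analogue of the classical integrability result of Frenkel-Zhu and is recorded in \cite{FZ92}, as already noted in the introduction. For the second factor, the free fermion algebra $\F^{2n-4}$ on an even number of generators is a well-known strongly rational vertex superalgebra (its bosonic part being a simple current extension of a lattice vertex operator algebra of type $D$). Since $C_2$-cofiniteness and rationality are each preserved under tensor product of vertex (super)algebras, the tensor product $L_{n-3}(\osp_{1|2}) \otimes \F^{2n-4}$ is strongly rational.

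It remains to pass to $\Z_2$-invariants. For any strongly rational vertex operator superalgebra $V$ and any finite solvable group $G \leq \Aut(V)$, the fixed-point subalgebra $V^G$ is strongly rational by the orbifold rationality theorem of Carnahan-Miyamoto. Applying this with $G = \Z_2$ generated by the parity involution immediately yields strong rationality of the right-hand side of the displayed isomorphism, and hence of $\W_{-1}(\so_{2n}, \minO)$.

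The only nontrivial bookkeeping point in this argument is ensuring that the orbifold rationality theorem is invoked in a form compatible with the super structure of $L_{n-3}(\osp_{1|2}) \otimes \F^{2n-4}$. If one prefers to stay in the purely bosonic setting, an equivalent route is to first pass to the super-even subalgebra of the tensor product, which is itself a strongly rational vertex operator algebra, and then apply Miyamoto's $\Z_2$-orbifold rationality theorem in the classical (non-super) sense. Either way the strong rationality of $\W_{-1}(\so_{2n}, \minO)$ follows, and no further computation is required beyond Theorem \ref{coset construction}.
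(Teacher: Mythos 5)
Your argument is correct and follows essentially the same route as the paper: specialize Theorem \ref{coset construction} to $N=2n$, note that the level $n-3$ is a positive integer so $L_{n-3}(\osp_{1|2})$ is strongly rational (the paper likewise cites \cite{FZ92}), and deduce strong rationality of the even subalgebra of the tensor product with free fermions. The paper states this in one sentence; you have merely made explicit the standard preservation results (tensor products and $\Z_2$-orbifolds/even parts) that it leaves implicit, and your remark about handling the super structure is a reasonable precaution rather than a divergence.
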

We believe that this theorem and corollary are special instances of a large family of similar results and we aim to initiate a systematic study of these phenomena based on the study of quotients of certain universal $\W_\infty$-algebras and their associated truncature curves. 

For instance, in \cite{CKL}, it is proved that, for each $N\geq7$, $\text{Com}(V^{k+2}(\mathfrak{so}_{N-4}), \W^k(\mathfrak{so}_N, \mathbb{O}_{\text{min}}))^{\mathbb{Z}_2}$ is a $1$-parameter quotient of the universal $2$-parameter vertex algebra $\W^{\mathfrak{sp}}_{\infty}$ defined over the polynomial ring $\mathbb{C}[c,t]$, where $c$ is the central charge, and $t$ is the level of the affine subalgebra of type $\mathfrak{sl}_2$.
The $1$-parameter quotients of $\W^{\mathfrak{sp}}_{\infty}$ are in bijection with certain curves in the parameter space called truncation curves, and intersections of truncation curves for two such $1$-parameter quotients are expected to give rise to pointwise coincidences. 
More precisely, in the case we are interested in, we have
$$\text{Com}(V^{k+2}(\mathfrak{so}_{N-4}), \W^k(\mathfrak{so}_N, \mathbb{O}_{\text{min}}))^{\mathbb{Z}_2} \simeq 
\left\{
\begin{array}{ll}
 \mathcal{C}^{\psi_k}_{CD}(n-2,1), & N=2n,\ \ \psi_k = 2n-2,
\smallskip
\\  \mathcal{C}^{\psi_k}_{CB}(n-2,1), & N =2n+1, \ \ \psi_k = 2n-1. \\
\end{array} 
\right.
$$
Similarly, $V^{\ell}(\mathfrak{osp}_{1|2})^{\mathbb{Z}_2}$ is such a $1$-parameter quotient of $\W^{\mathfrak{sp}}_{\infty}$, and coincides with $\mathcal{C}^{\psi_\ell}_{BB}(0,0)$ for $\psi_\ell = \frac{3}{2}$. The truncation curves for $\mathcal{C}^{\psi_\ell}_{BB}(0,0)$ and $\mathcal{C}^{\psi_k}_{CD}(n-2,1)$, $\mathcal{C}^{\psi_k}_{CB}(n-2,1)$ intersect when $c = \frac{N-6}{N-3}$ and $t = -1 + \frac{N-4}{2}=\ell$, suggesting the isomorphism 
$$\text{Com}(L_{1}(\mathfrak{so}_{N-4}), \W_{-1}(\mathfrak{so}_N, \mathbb{O}_{\text{min}}))^{\mathbb{Z}_2} \simeq L_{-1 + \frac{N-4}{2}}(\mathfrak{osp}_{1|2})^{\Z_2}$$
and the isomorphism of Theorem \ref{coset construction}.

Regardless of the parity of $N$, the orbifold realization of the minimal $\W$-algebra $\W_{-1}(\so_{N},\minO)$ implies the classification of simple ordinary modules (Proposition \ref{Classification of simples via orbifolds}).
By identifying these simple modules with those obtained from the quantum Hamiltonian reduction of simple highest weight modules $L_{-1}(\Lambda)$ of the affine Lie algebra $\widehat{\so}_N$, we obtain the following classification. (See \S \ref{sec: ordinary modules} for details on notation.)
\begin{theorem}[Proposition \ref{simples via reduction}]\label{thm:classification}
For $N\in\Z_{\geq7}$, the complete set of inequivalent simple ordinary $\W_{-1}(\so_{N},\minO)$-modules is given by
\begin{itemize}
    \item (Type $D$) $N=2n$
    \renewcommand{\arraystretch}{2.}
    \begin{align*}
    \begin{array}{ll}
          \HH_{\OO}^0(L_{-1}(\lm \omega_1)),&\ccases{\HH_{\OO}^0(L_{-1}(\omega_3))}{(\lm=0)}{\HH_{\OO}^0(L_{-1}( (\lm-1)\omega_1+\omega_2))}{(\lm\neq0)},\\
         \HH_{\OO}^0(L_{-1}(\lm \omega_1+\omega_{n-1})),&  \HH_{\OO}^0(L_{-1}(\lm \omega_1+\omega_n)). 
    \end{array}
    \end{align*}
    where $\lm$ is an integer satisfying $0\leq \lm\leq n-3$.
\renewcommand{\arraystretch}{1.}
    \item (Type $B$) $N=2n+1$
    \renewcommand{\arraystretch}{2.}
    \begin{align*}
    \begin{array}{ll}
         \HH_{\OO}^0(L_{-1}(\lm \omega_1)),& \ccases{\HH_{\OO}^0(L_{-1}(\omega_3))}{(\lm=0)}{\HH_{\OO}^0(L_{-1}( (\lm-1)\omega_1+\omega_2))}{(\lm\neq0)},\\
         & \HH_{\OO}^0(L_{-1}(\mu \omega_1+\omega_n))
    \end{array}
    \end{align*}
    where $\lm,\mu$ are integers satisfying $0\leq \lm\leq 2n-3$, $0\leq \mu\leq 2n-4$.
\renewcommand{\arraystretch}{1.}
\end{itemize}
\end{theorem}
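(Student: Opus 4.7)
The plan is to leverage Theorem \ref{coset construction}: by Proposition \ref{Classification of simples via orbifolds} the orbifold realization already provides an \emph{abstract} classification of the simple ordinary $\W_{-1}(\so_N,\minO)$-modules, and the remaining task is to realize each of them concretely as a minimal reduction $\HH_\OO^0(L_{-1}(\Lambda))$ for an explicit dominant weight $\Lambda$ of $\so_N$ as listed in the two tables.

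First, for each weight $\Lambda$ appearing in the statement I would invoke Arakawa's vanishing and simplicity results for the minimal quantum Hamiltonian reduction to conclude that $\HH_\OO^j(L_{-1}(\Lambda)) = 0$ for $j \neq 0$ and that $\HH_\OO^0(L_{-1}(\Lambda))$ is a non-zero simple $\W_{-1}(\so_N,\minO)$-module. The hypothesis to check is the standard condition that $(\Lambda + \rho)$ pairs non-trivially with the coroots of the centralizer of the $\sll_2$-triple attached to $\minO$, which is immediate from the numerical bounds placed on $\lm$ and $\mu$. The same data then produce, via the Kac-Wakimoto character formula, the top conformal weight of each $\HH_\OO^0(L_{-1}(\Lambda))$ together with its top component as a module over the affine subalgebra $L_1(\so_{N-4}) \subset \W_{-1}(\so_N, \minO)$.

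Second, I would compute the analogous top-weight and top-character data on the orbifold side using Theorem \ref{coset construction}: the decomposition of the simple $L_{-1+\frac{N-4}{2}}(\osp_{1|2})$-modules under their Heisenberg and $\sll_2$ subalgebras is explicit, and the characters of the untwisted and $\Z_2$-twisted modules of $\F^{N-4}$ are standard. Matching these two sets of invariants term by term, and using that the total count of simple ordinary modules predicted by Proposition \ref{Classification of simples via orbifolds} agrees with the cardinality of the tables in the statement, yields the bijection between the two classifications.

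The main obstacle lies in this matching step. The $\Z_2$-orbifold in Theorem \ref{coset construction} may cause a single simple module of $L_{-1+\frac{N-4}{2}}(\osp_{1|2}) \otimes \F^{N-4}$ either to split into two inequivalent simples over the fixed-point subalgebra or to remain irreducible, and the twisted sector contributes additional simples whose top components are Ramond-type vacua of the free fermions. Correctly assigning the pair of spin-fundamental weights $\omega_{n-1}, \omega_n$ in type $D$ (and $\omega_n$ in type $B$) to the twisted sector, while placing the remaining tensor-type and Cartan families in the untwisted sector, requires a careful parity analysis of the $\Z_2$-charge on each simple module, and is the most delicate bookkeeping of the argument.
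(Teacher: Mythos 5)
Your proposal follows essentially the same route as the paper: first obtain the abstract list of simples from the orbifold realization (the paper's Theorem \ref{Classification of simples via orbifolds}, built on the induction equivalence of Theorem \ref{classification of simples} and the known simple local and Ramond modules of $L_\ell(\osp_{1|2})$ and $\mathcal{F}^{N-4}$), then identify each with a reduction $\HH^0_\OO(L_{-1}(\Lambda))$ by invoking Arakawa's theorem (Theorem \ref{A05: minimal reduction}) and matching the top space as a $\g^\sharp=\sll_2\oplus\so_{N-4}$-module together with its conformal weight $h^\sharp_\Lambda$, exactly the invariants the paper compares in \eqref{conformal weights D}--\eqref{conformal weights B} and Table \ref{simple modules vira orbifold}. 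The only cosmetic difference is your phrasing of the non-vanishing criterion (the paper checks $\widehat{\Lambda}(\alpha_0^\vee)=-(\Lambda,\theta)-1\notin\Z_{\geq0}$, automatic for dominant integral $\Lambda$ at level $-1$), which does not affect the argument.
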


Minimal $\W$-algebras can be used to derive results on ordinary modules of the corresponding affine vertex algebra \cite{Ad2}. 
Theorem \ref{thm:classification} together with some categorical considerations presented in Appendix \ref{appendix} imply that the category $\mathcal C(\mathfrak{so}_n, -1)$ of ordinary modules of the simple affine vertex algebra $L_{-1}(\so_n)$ is a semisimple, finite ribbon category by Corollary \ref{cor:FTC}, while Theorem  \ref{classification of simples} gives a tensor equivalence between the category of ordinary modules of $\W_{-1}(\so_{N},\minO)$ and the underlying category (\ie\ all morphisms are even) of the category of ordinary-modules, including Ramond-twisted modules of $L_{-1+\frac{N-4}{2}}(\osp_{1|2})\otimes \mathcal{F}^{N-4}$. Note, that the latter is a vertex tensor category by \cite{CGL}.
\begin{corollary}\label{cor:res}
    For $N\in\Z_{\geq7}$,
    \begin{itemize}
        \item The category of ordinary $L_{-1}(\so_n)$-modules is a semisimple, finite ribbon category. 
        \item There is a tensor equivalence between the category of ordinary $\W_{-1}(\so_{N},\minO)$-modules and the underlying category of ordinary-modules, including Ramond-twisted modules, of $L_{-1+\frac{N-4}{2}}(\osp_{1|2})\otimes \mathcal{F}^{N-4}$.
    \end{itemize}
\end{corollary}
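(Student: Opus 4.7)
The corollary repackages results already in place: the second bullet is precisely the content of Theorem \ref{classification of simples}, while the first follows from Theorem \ref{thm:classification} combined with Corollary \ref{cor:FTC} of Appendix \ref{appendix}. The plan is therefore to indicate how each piece enters and to isolate what must be verified about the minimal Hamiltonian reduction functor $\HH_{\OO}^{0}$ in order to transport information from the $\W$-algebra side to the affine side.

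For the tensor equivalence in the second bullet, the starting point is Theorem \ref{coset construction}, which realizes $\W_{-1}(\so_{N},\minO)$ as a $\Z_2$-orbifold of $\mathcal{V} := L_{-1+\frac{N-4}{2}}(\osp_{1|2})\otimes \mathcal{F}^{N-4}$. By the orbifold/extension machinery of \cite{CMSY}, every simple ordinary $\W_{-1}(\so_{N},\minO)$-module is the even part of a simple (possibly $\Z_2$-twisted) $\mathcal{V}$-module: the untwisted sector contributes ordinary $\mathcal{V}$-modules and the twisted sector contributes Ramond-twisted $\mathcal{V}$-modules on both tensor factors. The $\osp_{1|2}$-factor carries a full vertex tensor category of Ramond-twisted modules by \cite{CGL}, the fermion Ramond sector is standard, and combining these with the extension-theoretic tensor structure yields the claimed tensor equivalence of underlying (parity-forgetting) categories.

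For the first bullet, the relevant functor is the minimal Hamiltonian reduction $\HH_{\OO}^{0}$ from $\KL_{-1}(\so_{N})$ to the category of ordinary $\W_{-1}(\so_{N},\minO)$-modules. Theorem \ref{thm:classification} lists every simple ordinary $\W_{-1}(\so_{N},\minO)$-module as $\HH_{\OO}^{0}(L_{-1}(\Lambda))$ for $\Lambda$ in an explicit finite set, so there are only finitely many simples and every simple of $\mathcal{C}(\so_{N},-1)$ either reduces to one of them or is killed by $\HH_{\OO}^{0}$. The categorical work in Appendix \ref{appendix} (culminating in Corollary \ref{cor:FTC}) upgrades this bijection of simples to the assertion that $\mathcal{C}(\so_{N},-1)$ itself is a semisimple, finite ribbon category. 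Concretely, one uses the second bullet (so that the $\W$-algebra side is already known to be semisimple, finite, and ribbon), exactness and conservativity of minimal Hamiltonian reduction on the relevant block (in the spirit of the arguments of \cite{A+7}), and then reconstructs the tensor/ribbon structure on $\mathcal{C}(\so_{N},-1)$ from that of its image.

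The main obstacle is the Ramond-sector bookkeeping in the orbifold picture: one must verify that the $\Z_2$-action arising from Theorem \ref{coset construction} is the diagonal of the parity involution of $L_{-1+\frac{N-4}{2}}(\osp_{1|2})$ and the fermion-number involution on $\mathcal{F}^{N-4}$, and that its Ramond-twisted simples exhaust the simple twisted $\mathcal{V}$-modules appearing in the orbifold decomposition. Once this identification is settled, the remaining steps are formal consequences of \cite{CMSY,CGL} together with the exactness and conservativity results packaged in Appendix \ref{appendix}.
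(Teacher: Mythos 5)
Your overall architecture matches the paper's: the second bullet is exactly Theorem \ref{classification of simples} applied to the simple-current extension $A=L_{-1+\frac{N-4}{2}}(\osp_{1|2})\otimes\mathcal{F}^{N-4}$ of $V=\W_{-1}(\so_{N},\minO)$, and the first bullet combines the reduction-theoretic classification with Corollary \ref{cor:FTC}. Two points where your write-up diverges from the paper. First, the tensor equivalence is realized by the induction functor $\mathcal F\colon\sC\to\ub{\sSC}_A$ of the appendix (the \cite{CKM} machinery for vertex superalgebra extensions), whose essential surjectivity rests on $J\boxtimes J\simeq\one$ via Frobenius reciprocity; it is not the restriction-to-even-parts picture you attribute to \cite{CMSY}, although restriction is inverse to induction here, so this is only a misattribution of mechanism. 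Second, and more substantively, your ``Concretely'' sentence for the first bullet proposes to reconstruct the tensor and ribbon structure on $\sC(\so_N,-1)$ from its image under $\HH_{\OO}^0$. That step would fail as stated: $\HH_{\OO}^0$ is not known to be a tensor functor, so braiding and twist cannot be transported through it. The paper's route --- which your earlier sentence correctly gestures at --- uses the reduction only to establish that $\sC(\so_N,-1)$ is finite and semisimple (finitely many simples and no nontrivial extensions, in the spirit of \cite{Ad2,A+7}); the ribbon structure is then produced intrinsically by Corollary \ref{cor:FTC}, i.e.\ by \cite{CY} (vertex tensor structure from semisimplicity), \cite{ALSW} ($r$-category from self-duality and closure under contragredients), and \cite{EP} (rigidity from moderate growth). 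With that correction, and with the Ramond-sector bookkeeping you flag being precisely what \S\ref{orbifold} carries out, your argument coincides with the paper's.
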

A complete description of the category of weight $L_{-1+\frac{N-4}{2}}(\osp_{1|2})$-modules, including Ramond twisted modules, as a ribbon category can be inferred from the recent results on the category of weight $L_{-1+\frac{N-4}{2}}(\mathfrak{sl}_{2})$-modules \cite{ACK, C1, C2, CMY, NOCW}. 
This is in preparation \cite{CR} and using Corollary \ref{cor:FTC} this result will describe the ribbon category of weight modules of $\W_{-1}(\so_{N},\minO)$.

\subsection*{Organization of the paper} 
The paper is organized as follows. In \S \ref{sec2} we use the uniqueness Theorem of minimal $\W$-algebras of \cite{ACKL} to prove Theorem \ref{coset construction}. In \S \ref{sec: ordinary modules} we classify all ordinary modules of $\W_{-1}(\so_{N},\minO)$ and Appendix \ref{appendix} is a collection of categorical statements that imply Corollary \ref{cor:res}.

\subsection*{Acknowledgements} 
We thank Tomoyuki Arakawa for drawing our attention to his work \cite{AM15} with A. Moreau, which triggered this project.
T.C. is supported by DFG project Projektnummer 551865932.
J.F. is supported by a University of Melbourne Establishment Grant and  Andrew Sisson Support Package 2025.
V.K. is supported by Alexander von Humboldt Foundation. 
A.L. is supported by NSF Grant DMS-2401382 and Simons Foundation Grant MPS-TSM-00007694.
S.N. thanks the University of Denver for its hospitality during his stay.

\section{Orbifold realization of $\W_{-1}(\so_N,\minO)$}\label{sec2}

\subsection{Preliminary}
Given a vertex superalgebra $V$, let $V=V_{\overline{0}}\oplus V_{\overline{1}}$ be the parity decomposition, $\vac\in V_{\overline{0}}$ the vacuum vector, and
$$Y(\cdot,z)\colon V \rightarrow \mathrm{End}(V)\llbracket z^{\pm1}\rrbracket,\qquad a\mapsto a(z):=\sum_{n \in \Z} a_{(n)}z^{-n-1}$$
the vertex operator.
The translation operator and the normally ordered product are defined by
\begin{align*}
    \pd\colon V \rightarrow V,\ A \mapsto A_{(-2)}\vac,\qquad 
    V \times V\rightarrow V,\ (A,B) \mapsto A_{(-1)}B
\end{align*}
respectively, and the $\lm$-bracket by
\begin{align*}
   [\cdot{}_\lambda \cdot]\colon V\times V \rightarrow V[\lambda],\quad (A,B)\mapsto [A{}_\lambda B]:=\sum_{n=0}^\infty A_{(n)}B \tfrac{\lm^n}{n!}.
\end{align*} 

Let $\g$ be a simple Lie algebra, or the Lie superalgebra $\osp_{1|2n}$, and set $u_i$ ($i=1,\dots, \dim \g$) a basis of $\g$.
Then $V^k(\g)$ denotes the universal affine vertex (super)algebra associated with $\g$ at level $k \in \C$.
It is strongly and freely generated by the fields 
\begin{equation}\label{eq:field_affine_VOA}
u_i(z)=\sum_{n\in\Z}u_{i,(n)}z^{-n-1},\qquad u_{i,(n)}:=u_i\otimes t^n,
\end{equation}
\ie\ the PBW basis given by the monomials
\begin{equation}
    u_{i_1,(-n_1)}u_{i_2,(-n_2)}\dots u_{i_\ell,(-n_\ell)}\vac
\end{equation}
with $i_1\geq i_2\geq\dots\geq i_\ell$ and $n_j\geq n_{j+1}$ if $i_j= i_{j+1}$,
spans $V^k(\g)$.
Their $\lm$-brackets are given by
\begin{align*}
    [u_i{}_\lm u_j]=k(u_i,u_j)\lm + [u_i,u_j]
\end{align*}
where $(\cdot,\cdot)$ is the normalized invariant bilinear form on $\g$. 
In this article, we always assume the level to be \emph{non-critical}, \ie\ $k\neq-h^\vee$ where $h^\vee$ is the dual Coxeter number of $\g$. Then $V^k(\g)$ admits a conformal vector $L$ defined by the Sugawara construction, which satisfies the $\lambda$-bracket 
\begin{align*}
    \lbr{L}{L}=\frac{c_k}{2}\lm^3+2L \lm+ \pd L
\end{align*}
where the scalar $c_k=\frac{k\sdim\g}{k+h^\vee}$ is the central charge of $V^k(\g)$. The field $L$ generates a Virasoro vertex subalgebra at central charge $c_k$, denoted by $\Vir^{c_k}$.
Denote by $L_k(\g)$ the unique simple quotient of $V^k(\g)$ and by $\Vir_{c_k}$ the unique simple quotient of $\Vir^{c_k}$.

\subsection{Minimal $\W$-algebras}\label{Minimal Walg}
Let $\g$ be a simple Lie algebra and take $f\in\g$ a (non-zero) nilpotent element.
By the Jacobson-Morozov theorem, one may find an $\sll_2$-triple $\{e,h=2x,f\}$ in $\g$, such that the adjoint $x$-action defines a $\frac{1}{2}\Z$-grading $\Gamma\colon \g=\bigoplus_{i}\g_i$.
The $\W$-algebra $\W^k(\g,f,\Gamma)$ associated with the pair $(f,\Gamma)$ is the quantum Hamiltonian reduction of the affine vertex algebra $V^k(\g)$.
As it depends only on the adjoint class, \ie\ the nilpotent orbit $\OO$ containing $f$, we denote it by $\W^k(\g,\OO)$ and its unique simple quotient by $\W_k(\g,\OO)$.

The nilpotent orbits form a poset where the partial order is defined by the closure relations for the Zariski topology on the nilpotent cone $\mathcal{N}$ of $\g$. The minimal nilpotent orbit $\OO_{\mathrm{min}}$ is the unique smallest, non-trivial orbit in $\mathcal{N}$. 
In this case, the grading $\Gamma$ is of the form
\begin{align*}
    \g=\g_{-1} \oplus \g_{-1/2} \oplus \g_{0} \oplus \g_{1/2} \oplus \g_{1}
\end{align*}
and $\W^k(\g,\minO)$ is strongly and freely generated by a basis of the centralizer 
$$\g^f=\g^f_0\oplus \g^f_{-1/2}\oplus \g^f_{-1}.$$
Note that $\g^\sharp=\g^f_0$ is a Lie subalgebra acting naturally on $\g^f_{-1/2}$ and $\g^f_{-1}$. In particular, $\g^f_{-1}=\C f$ is the trivial $\g^\sharp$-module.
By \cite[Theorem 5.1]{KW-QR},
\begin{itemize}
    \item[(M1)] $\g^\sharp$ generates an affine vertex subalgebra $V^{k^\sharp}(\g^\sharp)$ at level $k^\sharp$ that depends linearly on $k$,
    \item[(M2)] $\g^f_{-1}$ generates a Virasoro vertex subalgebra $\Vir^{c_{\W,k}}=\langle L_\W\rangle$ of central charge 
    $$c_{\W,k}=\frac{k \dim\g}{k+h^\vee}-6k+h^\vee-4,$$
    \item[(M3)] The generators $G^{\{u\}}$ ($u\in \g^f_{-1/2}$) have conformal weight $3/2$ and transforms as $\g^f_{-1/2}$ under the $\g^\sharp$-action, \ie\
    \begin{align*}
    \lbr{L_\W}{G^{\{u\}}}=\frac{3}{2}G^{\{u\}} \lm+\pd G^{\{u\}},\qquad \lbr{a}{G^{\{u\}}}=G^{\{[a,u]\}}\quad (a\in \g^\sharp).    
    \end{align*}
\end{itemize}
The remaining $\lambda$-brackets to fully determine $\W^k(\g,\minO)$ are given by
\begin{align}\label{OPE for weight3/2}
    \lbr{G^{\{u\}}}{G^{\{v\}}}=\frac{1}{6}a_0(k) (e,[u,v])\lm^2+ a_1(u,v;k)\lm+a_2(u,v;k)
\end{align}
with a certain scalar $a_0(k)$ depending only on $k$, and differential polynomials $a_i(u,v;k)$ of conformal weight $i=1,2$, see \cite[Theorem 5.1]{KW-QR} for the precise formula.
    
\begin{theorem}[{\cite[Theorem 3.2]{ACKL}}]\label{uniqueness of minimal Walg}
Let $V$ be a simple vertex algebra strongly generated by elements satisfying the properties (M1)-(M3). 
If in addition, $a_0(k)\neq0$ and the bilinear form 
\begin{equation*}
    V_{3/2}\times V_{3/2}\to \C,\qquad (A,B)\mapsto A_{(2)}B
\end{equation*}
is non-degenerate, then $V \simeq \W_k(\g,\minO)$ holds.
\end{theorem}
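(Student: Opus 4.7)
The plan is to construct a surjective vertex algebra homomorphism $\phi\colon \W^k(\g,\minO)\to V$ and then invoke simplicity of $V$ to conclude. As recalled in \S\ref{Minimal Walg}, $\W^k(\g,\minO)$ is strongly and freely generated by an affine vertex subalgebra $V^{k^\sharp}(\g^\sharp)$, a Virasoro element $L_\W$, and weight-$3/2$ fields $G^{\{u\}}$ indexed by a basis of $\g^f_{-1/2}$, with OPEs described by (M1)--(M3) together with the universal formula \eqref{OPE for weight3/2}. The hypotheses on $V$ provide canonical images for each of these generators, satisfying the first three families of OPEs identically, so I would define $\phi$ on generators accordingly and verify that the remaining $[G_\lambda G]$-OPE is also preserved; freeness of $\W^k(\g,\minO)$ then forces $\phi$ to extend uniquely as a vertex algebra homomorphism.

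The heart of the argument is to show that $[G^{\{u\}}{}_\lambda G^{\{v\}}]$ computed in $V$ coincides with the universal expression \eqref{OPE for weight3/2}. By conformal weight and $\g^\sharp$-equivariance, the output lies in a very restricted subspace of $V$: the $\lambda^2$-coefficient is a $\g^\sharp$-invariant scalar bilinear pairing on $V_{3/2}\simeq\g^f_{-1/2}$, which by a Schur-type argument is proportional to $(e,[u,v])$. The non-degeneracy of the form $V_{3/2}\times V_{3/2}\to\C$ forces this proportionality constant to be non-zero, and the hypothesis $a_0(k)\neq 0$ on the universal side allows one to normalize so that the constants match. The subleading coefficients $a_1(u,v;k)$ and $a_2(u,v;k)$ are differential polynomials in the affine generators and $L_\W$ of conformal weights $1$ and $2$ respectively; their shape is rigidly constrained by $\g^\sharp$-equivariance and the conformal weight grading, and the remaining scalar freedom is pinned down by skew-symmetry together with the Jacobi identity applied to triples involving $L_\W$, $a\in\g^\sharp$, and a further $G$-field.

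Once $\phi$ is established as a well-defined homomorphism, it is surjective because its image contains a strong generating set of $V$. Its kernel is thus a proper ideal of $\W^k(\g,\minO)$, so $V$ is a simple quotient, and by uniqueness of the simple quotient $V\simeq\W_k(\g,\minO)$. The main obstacle is the rigidity step: one must show that (M1)--(M3) together with the two non-degeneracy hypotheses genuinely \emph{force} the $[G_\lambda G]$-OPE to agree with the universal one. Without $a_0(k)\neq 0$ the leading ``central'' term could vanish, and without non-degeneracy of the bilinear form the identification $V_{3/2}\simeq\g^f_{-1/2}$ could collapse; both degenerations would permit \emph{a priori} distinct simple vertex algebras with the same low-weight data. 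These two hypotheses are precisely tailored to exclude such collapses and make the rigidity argument airtight.
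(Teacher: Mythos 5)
This statement is not proved in the paper: it is imported verbatim as \cite[Theorem 3.2]{ACKL}, so there is no internal proof to compare against. Your outline reconstructs the standard argument from that reference correctly: freeness of $\W^k(\g,\minO)$ lets you define a homomorphism on strong generators once all $\lambda$-brackets are matched; (M1)--(M3) fix everything except $[G^{\{u\}}{}_\lambda G^{\{v\}}]$; the $\lambda^2$-coefficient is a $\g^\sharp$-invariant \emph{antisymmetric} scalar form on $\g^f_{-1/2}$ (antisymmetry following from skew-symmetry of the $\lambda$-bracket, since the would-be correction term $G^{\{v\}}_{(3)}G^{\{u\}}$ has negative weight and vanishes), hence proportional to the symplectic form $(e,[u,v])$ by essential uniqueness of such forms on $\g_{-1/2}$; non-degeneracy plus $a_0(k)\neq0$ lets you rescale to match; and the weight-$1$ and weight-$2$ coefficients $a_1,a_2$ are then forced by applying modes of $a\in\g^\sharp$ and $L_\W$ together with the Jacobi identity, since $V_1$ and $V_2$ contain nothing beyond the expressions built from the (M1)--(M2) generators. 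Surjectivity plus simplicity of $V$ and uniqueness of the simple quotient finish the argument, exactly as in [ACKL]. The only places needing care, which you flag but do not carry out, are the Schur-type uniqueness of the invariant form (which uses the specific structure of $\g_{-1/2}$ as a $\g^\sharp$-module) and the explicit mode computations pinning down $a_1$ and $a_2$; these are routine but are where the actual work of the cited proof lies.
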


\subsection{Proof of Theorem \ref{coset construction}}
We fix a basis of $\osp_{1|2}$ whose corresponding fields \eqref{eq:field_affine_VOA} strongly and freely generate $V^\ell(\osp_{1|2})$. 
The even fields $e, f, h$ generate a vertex subalgebra $V^\ell(\sll_2)$ and the non-trivial $\lm$-brackets involving the odd fields $x, y$ of conformal weight one satisfy
\begin{equation*}
    \begin{aligned}
&\lbr{h}{x}= x, \qquad &&\lbr{h}{y} =  -y, \qquad &&\lbr{f}{x}=y, \qquad &&\lbr{e}{y}=x,\\
&\lbr{x}{x}=-2e, \qquad &&\lbr{y}{y}=2f,\qquad &&\lbr{x}{y}=2\ell \lm +h.
    \end{aligned}
\end{equation*}
Meanwhile, the $r$-free fermions vertex superalgebra $\mathcal{F}^r$ is strongly and freely generated by $\psi_1, \dots, \psi_r$ satisfying the $\lm$-brackets
$$\lbr{\psi_i}{\psi_j}=\delta_{i, j}.$$
Moreover, $\mathcal{F}^r$ has a conformal vector,
$$L_{\mathcal{F}^r}= \frac{1}{2}\sum_{i=1}^r (\partial\psi_i)\psi_i,$$
of central charge $c_r=\frac{r}{2}$, that gives $\psi_i$ ($1\leq i\leq r$) conformal weight $1/2$. 

Define the even vertex subalgebra
$$\scV^\ell(r):=(V^\ell(\osp_{1|2}) \otimes \mathcal{F}^r)_{\overline{0}}\subset V^\ell(\osp_{1|2}) \otimes \mathcal{F}^r$$
strongly generated by the subset 
$$\widetilde{\mathcal{S}}=\{e, f, h,X_{i}^{a, b},Y_{i}^{a, b},W_{i,j}^{a, b},Z_{z,z'}^{a, b}\mid 1\leq i,j\leq r,\, z, z' \in \{x, y\},\, a,b \geq0\}$$ 
where
$$X_{i}^{a, b}= (\partial^a x) (\partial^b \psi_i), \quad Y_{i}^{a, b} = (\partial^a y)(\partial^b \psi_i),\quad W_{i, j}^{a, b} =(\partial^a\psi_i) (\partial^b \psi_j), \quad Z_{z, z'}^{a, b} = (\partial^a z)(\partial^b z').$$

\begin{lemma}\label{lem:strong_gen}
For $r\geq3$, the vertex algebra $\scV^\ell(r)$ is strongly generated by the subset
$$\mathcal{S}=\{e, f, h, X_{m}^{0, 0}, Y_{m}^{0, 0}, W_{i, j}^{0, 0}, Z_{x,y}^{0, 0}\mid 1\leq i<j\leq r, 1\leq m\leq r\}\subset \widetilde{\mathcal{S}}.$$
\end{lemma}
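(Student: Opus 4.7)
The plan is to verify directly that every element of $\widetilde{\mathcal{S}}$ lies in the vertex subalgebra $W \subset \scV^\ell(r)$ strongly generated by $\mathcal{S}$. Since $W$ is automatically closed under normally ordered products, derivatives, and Fourier modes of $\lm$-brackets, it suffices to produce each $X_m^{a,b}, Y_m^{a,b}, W_{i,j}^{a,b}, Z_{z,z'}^{a,b}$ from such operations applied to elements of $\mathcal{S}$.

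First I would reduce the task using the Leibniz identity $\pd\, :\! A B \!: \,=\, :\!(\pd A) B\!: + :\!A (\pd B)\!:$, which yields $\pd X_m^{a,b} = X_m^{a+1,b} + X_m^{a,b+1}$ and analogous relations for $Y_m^{a,b}$, $W_{i,j}^{a,b}$, $Z_{z,z'}^{a,b}$. Induction on $a+b$ reduces the problem to producing a single representative of each type at each total derivative degree $n$: given all bilinears of degree $< n$ already in $W$, one recovers all other degree-$n$ bilinears from any one such representative by iterated Leibniz.

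At degree $0$, the elements not already listed in $\mathcal{S}$ are the redundant diagonal and swapped bilinears. These are handled by explicit computation: $W_{i,i}^{0,0} = 0$ and $W_{j,i}^{0,0} = -W_{i,j}^{0,0}$ follow from $\lbr{\psi_i}{\psi_j}=\delta_{i,j}$, while $Z_{x,x}^{0,0} = -\pd e$, $Z_{y,y}^{0,0} = \pd f$, and $Z_{y,x}^{0,0} = -Z_{x,y}^{0,0} - \pd h$ follow from the brackets $\lbr{x}{x}=-2e$, $\lbr{y}{y}=2f$, $\lbr{x}{y}=2\ell \lm + h$ combined with skew-symmetry of the normally ordered product. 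All of these lie in $W$.

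For degree $\geq 1$, derivatives of the basic odd fields enter OPEs precisely through the $e^{T\pd_\lm}$ factor in the non-commutative Wick formula for $[:\!AB\!:{}_\lm C]$, and this factor produces nonzero derivative contributions exactly when some inner bracket is $\lm$-dependent, i.e., $\lbr{x}{y}$, $\lbr{x}{x}$, $\lbr{y}{y}$, $\lbr{h}{h}$, or $\lbr{e}{f}$. Concretely, expanding $\lbr{X_m^{0,0}}{Y_n^{0,0}}$ using $\lbr{x}{y}=2\ell \lm + h$ produces an expression whose constant-in-$\lm$ part equals $-2\ell\, W_{m,n}^{1,0}$ plus normally ordered triples such as $:\!h W_{m,n}^{0,0}\!:$ already in $W$; since $\ell = -1 + \tfrac{N-4}{2} \neq 0$ in the range relevant to Theorem~\ref{coset construction}, one extracts $W_{m,n}^{1,0} \in W$. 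An analogous computation of $\lbr{X_m^{0,0}}{Z_{x,y}^{0,0}}$, in which the $\lm$-dependence of $\lbr{x}{y}$ produces a $:\!x(\pd\psi_m)\!:$ term, yields $X_m^{0,1} \in W$ modulo triples in $W$; symmetric computations produce $Y_m^{0,1}$ and the various $Z_{z,z'}^{0,1}$. Higher-degree representatives at $a+b = n \geq 2$ are then obtained by iterating these OPEs with the bilinears produced at previous stages and invoking Step~1.

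The main technical hurdle is checking that the triple-product remainders appearing in these OPEs, such as $:\!hW_{m,n}^{0,0}\!:$, $:\!eY_m^{0,0}\!:$, and $:\!xh\psi_m\!:$, can indeed be rewritten by quasi-associativity as normally ordered polynomials in elements of $\mathcal{S}$ and their derivatives — straightforward but attentive bookkeeping, using that $e,f,h$ commute with all $\psi_i$. It is at this bookkeeping step that the hypothesis $r \geq 3$ becomes convenient, as it allows the choice of an auxiliary fermion index $k \notin \{i,j\}$ whenever a particular OPE chain among the $W_{i,j}^{0,0}$ requires one to untangle a relation.
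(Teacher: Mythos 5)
Your opening reduction via the Leibniz rule and your treatment of the degree-zero redundancies ($W_{i,i}^{0,0}=0$, $W_{j,i}^{0,0}=-W_{i,j}^{0,0}$, $Z_{x,x}^{0,0}=-\pd e$, $Z_{y,y}^{0,0}=\pd f$, $Z_{y,x}^{0,0}=-Z_{x,y}^{0,0}+\pd h$ --- note the sign on $\pd h$) match the paper. The core of your argument, however, has a genuine gap: you produce the higher bilinears $W_{m,n}^{1,0}$, $X_m^{0,1}$, etc.\ by extracting them from $\lm$-brackets such as $\lbr{X_m^{0,0}}{Y_n^{0,0}}$, and you justify this by asserting that the subalgebra $W$ strongly generated by $\mathcal{S}$ is ``automatically closed under Fourier modes of $\lm$-brackets.'' That is false for the span of normally ordered monomials in derivatives of $\mathcal{S}$; it is only automatic for the vertex subalgebra \emph{generated} by $\mathcal{S}$. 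Your argument therefore shows that $\mathcal{S}$ generates $\scV^\ell(r)$, not that it strongly generates it. The distinction is essential here: Theorem \ref{uniqueness of minimal Walg} requires strong generation by fields of the prescribed conformal weights. Attempting to repair this by first proving that the OPEs of $\mathcal{S}$ close on $W$ is circular, since the coefficients of those very OPEs are the elements $W_{m,n}^{1,0}$, $X_m^{0,1}$, \dots\ whose membership in $W$ is at issue. The paper avoids this entirely by exhibiting each element of $\widetilde{\mathcal{S}}$ as an explicit normally ordered polynomial in $\mathcal{S}$ and its derivatives, e.g.\ $X_{m}^{a+1,0}=X_{i}^{a,0}W_{i,m}^{0,0}$ for $i\neq m$, which is the correct decoupling argument for strong generation.

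Two further points. First, your extraction of $W_{m,n}^{1,0}$ divides by $\ell$, so your argument would fail at $\ell=0$, whereas the lemma is stated for arbitrary $\ell$ and the paper's normally ordered identities are level-independent. Second, you never actually produce the diagonal elements $W_{i,i}^{a,0}$ for $a\geq1$ (note $\pd W_{i,i}^{0,0}=0$, so your degree-induction has no seed for this type), and these are needed as correction terms in the relations expressing $Z_{z,z'}^{a+1,0}$. This is precisely where $r\geq3$ enters concretely: for $a=0$ one needs three distinct fermion indices to write $W_{i,i}^{1,0}=\tfrac{1}{2}\bigl(W_{i,j}^{0,0}W_{j,i}^{0,0}+W_{i,m}^{0,0}W_{m,i}^{0,0}-W_{m,j}^{0,0}W_{j,m}^{0,0}\bigr)$. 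Your closing remark that $r\geq3$ is ``convenient'' for bookkeeping leaves this, the one step where the hypothesis is genuinely used, unaddressed.
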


\begin{proof}
Using the Leibniz rule $\partial (AB) = (\partial A)B + A (\partial B)$, the generating set $\widetilde{\mathcal{S}}$ reduces to
$$\{e, f, h, X_{i}^{a, 0}, Y_{i}^{a, 0}, W_{i, j}^{a, 0}, Z_{z, z'}^{a, 0}\mid 1\leq i,j\leq r,\, z, z' \in \{x, y\},\, a\geq0\}.$$
Moreover, for all $1\leq i,j,m\leq r$ and $a\geq0$,
\begin{equation*}
X_{i}^{a, 0} W_{i, m}^{0, 0} = (1- \delta_{i, m}) X_{m}^{a+1, 0}, 
\qquad Y_{i}^{a, 0} W_{i, m}^{0, 0} = (1- \delta_{i, m}) Y_{m}^{a+1, 0}
\end{equation*}
and
\begin{equation*}
W_{i, j}^{a, 0} W_{j, m}^{0, 0} = 
\begin{cases}
    W_{i, m}^{a+1, 0}\quad &(i,j,m \text{ all different}),\\
    \frac{(-1)^a}{a+1}W_{j,j}^{a+1,0}+W_{m,m}^{a+1, 0}\quad &(i=m,j\neq m),\\
    0\quad&(j=m).
\end{cases}
\end{equation*}
In particular, we have for $1\leq i,j,m\leq r$ all different,
\begin{align*}
    W_{i,i}^{a+1,0}&=\begin{cases}
        \left(1- \frac{1}{(a+1)^2}\right)^{-1}\left(W_{i, j}^{a, 0} W_{j, i}^{0, 0}- \frac{(-1)^a}{a+1}W_{j, i}^{a, 0} W_{i, j}^{0, 0}\right)
    \qquad &(a>0),\\
    \frac{1}{2}(W_{i, j}^{0, 0} W_{j, i}^{0, 0}+W_{i, m}^{0, 0} W_{m,i}^{0, 0}-W_{m, j}^{0, 0} W_{j, m}^{0, 0})& (a=0).
    \end{cases}
\end{align*}
Noting that $W_{i, j}^{0, 0} = - W_{j, i}^{0, 0}$, one may further reduce the generating set so that $a=0$ for $W_{i, j}^{a, 0}$ $(i<j)$, $X_{m}^{a, 0}, Y_{m}^{a, 0}$.
Similarly,
\begin{equation*}
    \begin{split}
Z_{x, y}^{a+1, 0} = - X_{i}^{a, 0}Y_{i}^{0, 0}+\dots,\qquad
Z_{x, x}^{a+1, 0} = - X_{i}^{a, 0}X_{i}^{0, 0}+\dots,\qquad
Z_{y, y}^{a+1, 0} = - Y_{i}^{a, 0}Y_{i}^{0, 0}+\dots
\end{split}
\end{equation*}
where `$\dots$' are correction terms that only involve $W_{i, i}^{a, 0}$ and derivatives of $e,f,h$.
Thus, one may assume $a=0$ for the remaining $ Z_{z, z'}^{a, 0}$ as well. Because $Z_{z, z}^{0, 0}=0$ and $Z_{y,x}^{0, 0} = -Z_{x,y}^{0, 0}+\partial h$, one can restrict to $Z_{x,y}^{0, 0}$.
This completes the proof.
\end{proof}

\begin{proof}[Proof of Theorem \ref{coset construction} ]
The minimal nilpotent orbit $\minO\subset\so_{N}$ corresponds to the partition $[2^2,1^r]$ (with $r=N-4$) of $N$.
Applying Theorem \ref{uniqueness of minimal Walg}, we will show that the simple quotient 
$$\scV_\ell(r)\simeq (L_\ell(\osp_{1|2}) \otimes \mathcal{F}^r)_{\overline{0}},\qquad \ell=-1+\frac{r}{2},$$
is isomorphic to $\W_k(\so_{N},\minO)$ at level $k=-1$.

For $\W^k(\so_{N},\minO)$, $k=-1$, the data for (M1)-(M3) reads as 
\begin{align*}
  V^{k^\sharp}(\g^\sharp)\simeq V^{k_1^\sharp}(\sll_2) \otimes V^{k_2^\sharp}(\so_r),
  \quad \Vir^{c_{\W,k}} \subset \W^k(\so_{N},\minO)
\end{align*}
with 
$$  k_1^\sharp =-1+\frac{r}{2},\quad k_2^\sharp =1 ,\qquad c_{\W,-1} = -\frac{N(N-1)}{2(N-3)} +N=\frac{(r+4)(r-1)}{2(r+1)},$$
where $\so_1=0$, $\so_3\simeq\sll_2$, $\so_4\simeq \sll_2\oplus\sll_2$.
The elements of conformal weight $\frac{3}{2}$
form the natural basis of the standard representation of $\sll_2 \oplus \so_r$ and by \cite[Theorem 5.1]{KW-QR},
\begin{equation*}
    a_0(-1)=-3(N-3)\neq 0.
\end{equation*}

By Lemma \ref{lem:strong_gen}, $\scV^\ell(r)$ is strongly generated by the fields $e,f,h,W_{i, j}^{0, 0}, X_i^{0, 0}, Y_i^{0, 0}, Z_{x,y}^{0, 0}$.

Since, $\ell= k_1^\sharp$, of course, $e,f,h$ generates the vertex subalgebra $V^{k_1^\sharp}(\sll_2)$.
The $\frac{r(r-1)}{2}$ ($=\dim \so_r$) strong generators $W_{i, j}^{0, 0}$ ($1 \leq i < j \leq r$), that commute with $e,f,h$, all have conformal weight one.
Realising $\so_r$ as the set of matrices $\{X\in\gl_r\mid X^T=-X\}$ with the usual Lie bracket given by the commutator of matrices, one can identify $W_{i, j}^{0, 0}$ with $E_{i,j}-E_{j,i}$. 
Then, comparing the $\lambda$-brackets, we deduce that the $W_{i, j}^{0, 0}$'s generate a quotient of $V^1(\so_r)$.
Since they generate the simple vertex algebra $\mathcal{F}^r_{\overline{0}}$, hence they generate the simple quotient $L_1(\so_r)$.

Moreover, the calculation of the $\lambda$-brackets show that the $2r$ weight $\frac{3}{2}$ fields $X_m^{0, 0}$ and $Y_m^{0, 0}$ carry the structure of the standard representation of $\so_r \oplus \sll_2$. 
Finally, the conformal vector, constructed from the single conformal weight two generator $Z^{0, 0}_{x, y}$ and derivatives and normally ordered products of conformal weight one strong generators in $\scV_\ell(r)$, is given by
$$L_{\scV_\ell(r)}=L_{\osp_{1|2},\ell}+L_{\mathcal{F}^r}$$
and has central charge
$$c=\frac{2\ell}{3+2\ell}+\frac{r}{2}=c_{\mathcal{W},-1}.$$

It follows from Theorem \ref{uniqueness of minimal Walg} that  
$$\W_{-1}(\so_{N},\OO_{[2^2,1^r]}) \simeq (L_{-1 + \frac{r}{2}}(\osp_{1|2})\otimes \mathcal{F}^{r})_{\overline{0}}$$
as desired.
\end{proof}

\section{Ordinary Modules}\label{sec: ordinary modules}
In this section, we construct and classify the simple ordinary $\W_{-1}(\so_{N},\minO)$-modules.
This classification is first established in \S \ref{orbifold}, through the orbifold (Theorem \ref{coset construction}).
Then modules are realized in terms of the quantum Hamiltonian reduction in \S \ref{QHR-description}. 

\subsection{Generalities}
In this section, let $\g$ be a simple Lie algebra and $\widehat{\g}=\g[t^{\pm1}]\oplus \C K$ be the affine Lie algebra.
Fix
$$\g=\mathfrak{n}_+ \oplus \h \oplus \mathfrak{n}_-,\qquad \widehat{\g}=\widehat{\mathfrak{n}}_+ \oplus \widehat{\h} \oplus \widehat{\mathfrak{n}}_-$$
the usual triangular decompositions so that $\widehat{\h}=\h \oplus \C K$. 
Denote by $\alpha_1^\vee,\cdots,\alpha_n^\vee$ the simple coroots of $\g$ and by $\varpi_1,\cdots,\varpi_n$ its fundamental weights.
Then the simple coroots of $\widehat{\g}$ are 
$\alpha_0^\vee=-\theta+K, \alpha_1^\vee,\cdots,\alpha_n^\vee$
where $\theta$ is the highest root of $\g$ viewed as an element in $\h$ identified with its dual $\h^*$ through the normalized invariant bilinear form $(\cdot, \cdot)$. 

Let $\rho_\lm$ be the simple $\g$-module of highest weight $\lm \in \h^*$ and $L(\widehat{\lm})$ the simple $\widehat{\g}$-module of highest weight $\widehat{\lm} \in \widehat{\h}^*$.
The latter $L(\widehat{\lm})$ is naturally a (simple) positively graded $V^k(\g)$-module at level $k=\widehat{\lm}(K)$ whose the top space, 
that can be identified with the simple $\g$-module $\rho_\lm$ (with $\lm=\widehat{\lm}|_{\h^*}$),
has conformal weight 
\begin{align}
    h_\lm=\frac{({\lm},{\lm}+2{\rho})}{2(k+h^\vee)}
\end{align}
where ${\rho}=\sum_{i=1}^n\varpi_i$.
To highlight the structure of vertex algebra module, we denote $L(\widehat{\lm})$ by $L_k(\lm)$.

In the following, set $\OO=\minO$.
Given a $V^k(\g)$-module $M$, the zero-th BRST cohomology $\HH_{\OO}^0(M)$ associated with the minimal nilpotent orbit is naturally a $\W^{k}(\g,\OO)$-module. Indeed, this association gives a functor 
\begin{align*}
    \HH_\OO^0\colon V^k(\g)\mod\rightarrow \W^k(\g,\OO)\mod
\end{align*}
from the category of $V^k(\g)$-modules to the category of $\W^k(\g,\OO)$-modules.
In addition, the simple ordinary $\W^k(\g,\OO)$-modules are classified by the highest weight representations $\mathbf{L}_k(\lm^\sharp,h_\lambda^\sharp)$ of heighest weight $\lm^\sharp=\lm|_{\h^\sharp}$ with respect to the Cartan subalgebra $\h^\sharp= \h\cap \g^\sharp\subset \g^\sharp$ and conformal dimension $h_\lambda^\sharp$ \cite[\S 6]{A05}.

\begin{theorem}[\cite{A05}] \label{A05: minimal reduction}
For $\lm\in \h^*$, there is an isomorphism of $\W^k(\g,\OO)$-modules
\begin{align*}
    \HH^0_{\OO}(L_k(\lm)) \simeq \begin{cases}
        0 & \widehat{\lm}(\alpha_0^\vee)\in \Z_{\geq0}\\
        \mathbf{L}_k(\lm^\sharp, h_\lambda^\sharp) & \text{otherwise},
    \end{cases}
\end{align*}
where 
\begin{align}\label{conformal weight of minimal reduction}
    h_\lambda^\sharp= \frac{(\lm, \lm+2\rho)}{2(k+h^\vee)}-{(\lm,\theta)}.
\end{align}
\end{theorem}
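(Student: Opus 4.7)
I would follow the standard BRST/semi-infinite cohomology approach to minimal quantum Hamiltonian reduction, essentially Arakawa's own strategy. The reduction $\HH^\bullet_{\OO}(M)$ is computed as the cohomology of a BRST complex $C(M) = M \otimes \mathcal{F}_{\mathrm{ch}} \otimes \mathcal{F}_{\mathrm{ne}}$ attached to the grading $\Gamma$, where $\mathcal{F}_{\mathrm{ch}}$ is the charged ghost vertex superalgebra on $\g_{\geq 1/2}$ and $\mathcal{F}_{\mathrm{ne}}$ is the neutral fermion algebra on $\g_{1/2}$. The BRST differential decomposes as $d = d_{\mathrm{st}} + d_\chi$, and $M \mapsto \HH^0_\OO(M)$ is the functor to be computed on $L_k(\lm)$.

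The plan has two main steps. \emph{Step 1 (Verma reduction).} For the Verma module $M_k(\lm)$ I would run an Euler--Poincar\'e argument on a Kazhdan-type filtration to show $\HH^i_\OO(M_k(\lm)) = 0$ for $i \neq 0$, and identify $\HH^0_\OO(M_k(\lm))$ with the universal highest-weight $\W^k(\g,\OO)$-module $\mathbf{M}_k(\lm^\sharp, h_\lm^\sharp)$. The $\h^\sharp$-character and the action of the affine modes from $\h^\sharp$ are forced by the PBW structure of $C(M_k(\lm))$. The conformal weight formula $h_\lm^\sharp = \frac{(\lm,\lm+2\rho)}{2(k+h^\vee)} - (\lm,\theta)$ arises from the explicit form of the $\W$-algebra Virasoro element $L_\W$ as a modification of the affine Sugawara vector $L$ by a piece of the form $\pd x$ (modulo BRST-exact terms), whose action on the top vector of weight $\lm$ produces the shift $-(\lm,\theta)$.

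\emph{Step 2 (passage to simples).} Right-exactness of $\HH^0_\OO$ on the relevant subcategory of $\cO_k$, which follows from the Verma vanishing of Step 1 applied to $0 \to N \to M_k(\lm) \to L_k(\lm) \to 0$, gives a surjection $\mathbf{M}_k(\lm^\sharp, h_\lm^\sharp) \twoheadrightarrow \HH^0_\OO(L_k(\lm))$, so the target is either zero or the simple quotient $\mathbf{L}_k(\lm^\sharp, h_\lm^\sharp)$. To decide which, I would restrict to the affine subalgebra $\widehat{\mathfrak{sl}}_2(\theta) \subset \widehat{\g}$ generated by the $\theta$-root vectors and their loop modes: the condition $\widehat{\lm}(\alpha_0^\vee) \in \Z_{\geq 0}$ is exactly integrability of the top of $L_k(\lm)$ along $\alpha_0^\vee$, and the classical vanishing of semi-infinite cohomology on integrable $\widehat{\mathfrak{sl}}_2$-modules, implemented via a screening/Wakimoto resolution, forces $\HH^0_\OO(L_k(\lm)) = 0$ in that case. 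In the complementary case, non-vanishing together with the surjection yields the simple identification.

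The main obstacle is the vanishing in the integrable case $\widehat{\lm}(\alpha_0^\vee) \in \Z_{\geq 0}$. The key input is that the BRST complex for minimal reduction carries a screening operator built from the $\theta$-direction, so that integrability along $\alpha_0^\vee$ makes the corresponding subcomplex of $C(L_k(\lm))$ exact and kills the cohomology. Once this vanishing and the concentration in degree zero on Vermas are established, the rest of the argument is standard bookkeeping with highest-weight modules.
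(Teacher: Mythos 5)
First, a point of context: the paper does not prove this theorem at all --- it is imported verbatim from Arakawa \cite{A05} --- so there is no internal argument to compare against, and your sketch has to be judged as a reconstruction of Arakawa's proof. Its skeleton (BRST complex $M\otimes\chfermion\otimes\F_{\mathrm{ne}}$, Euler--Poincar\'e/filtration analysis on Vermas, reduction of the vanishing criterion to the $\alpha_0$-direction) is the right one, but Step 2 has a genuine gap. Right-exactness applied to $0\to N\to M_k(\lm)\to L_k(\lm)\to 0$ does give a surjection $\mathbf{M}_k(\lm^\sharp,h^\sharp_\lm)\twoheadrightarrow \HH^0_{\OO}(L_k(\lm))$, but from this you conclude that the target ``is either zero or the simple quotient,'' which does not follow: a nonzero quotient of a Verma-type module can be any highest-weight module, so neither the simplicity claim nor the final ``non-vanishing $+$ surjection $\Rightarrow$ simple identification'' is established. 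The mechanism in \cite{A05} is stronger and is the actual heart of the theorem: one proves the vanishing $\HH^i_{\OO}(M)=0$ for $i\neq 0$ for \emph{all} $M$ in the relevant category $\cO_k$ (not only Vermas), so that $\HH^0_{\OO}$ is exact, and one checks that it is compatible with contragredient duality, sending $M_k(\lm)^\vee$ to the dual Verma. Writing $L_k(\lm)$ as the image of the canonical map $M_k(\lm)\to M_k(\lm)^\vee$ and applying exactness identifies $\HH^0_{\OO}(L_k(\lm))$ with the image of $\mathbf{M}_k(\lm^\sharp,h^\sharp_\lm)\to \mathbf{M}_k(\lm^\sharp,h^\sharp_\lm)^\vee$, which is zero or simple. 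Your Verma-only vanishing gives right-exactness but not exactness in the middle, so this route is closed to you as written.

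Two further remarks. The vanishing criterion in the integrable case is not obtained in \cite{A05} via screening operators or Wakimoto resolutions (which are not obviously available for minimal reduction at arbitrary non-critical $k$); rather, once exactness is known, the dichotomy is read off from whether the canonical map above kills the highest-weight vector, and when $\widehat{\lm}(\alpha_0^\vee)=m\in\Z_{\geq0}$ the relation $(f_\theta\otimes t)^{m+1}v_{\lm}=0$ in $L_k(\lm)$ forces the would-be generator of $\HH^0_{\OO}(L_k(\lm))$ to vanish. So your ``key input'' is a placeholder for the genuinely hard part rather than an argument. Finally, a consistency check on Step 1: the correction term $\pd x$ shifts the conformal weight of the top vector by $-\lm(x)=-\tfrac{1}{2}(\lm,\theta)$, not by $-(\lm,\theta)$; comparison with the explicit values \eqref{conformal weights D} and \eqref{conformal weights B} used later in the paper confirms that $-\tfrac{1}{2}(\lm,\theta)$ is the correct shift, so the displayed formula \eqref{conformal weight of minimal reduction} appears to carry a typo which your sketch inherits while simultaneously describing the mechanism that would produce the correct answer.
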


\subsection{Irreducible modules via orbifold}\label{orbifold}
Here, we construct ordinary simple $\W_{-1}(\so_{N},\minO)$-modules by using the realization established in Theorem \ref{coset construction}
\begin{align*}
   \W_{-1}(\so_{N},\minO)
    &\simeq (L_\ell(\osp_{1|2}) \otimes \mathcal{F}^r)_{\overline{0}}
    =L_\ell(\osp_{1|2})_{\overline{0}}\otimes \mathcal{F}^r_{\overline{0}}\oplus L_\ell(\osp_{1|2})_{\overline{1}}\otimes \mathcal{F}^r_{\overline{1}},
\end{align*}
with $\ell=-1+\frac{r}{2}$ and $r=N-4$.
By Theorem \ref{classification of simples}, simple ordinary modules can be obtained as 
\begin{align}\label{constructing simple modules}
    \ev{(M\otimes N)}=\ev{M}\otimes \ev{N} \oplus \od{M}\otimes \od{N},\quad \od{(M\otimes N)}=\ev{M}\otimes \od{N} \oplus \od{M}\otimes \ev{N}
\end{align}
where $M$ and $N$ are simple ordinary $L_\ell(\osp_{1|2})$-module and $\mathcal{F}^r$-module respectively, that are both local or both Ramond twisted.

Simple $L_\ell(\osp_{1|2})$-modules have been studied in \cite{CFK, CGL, CKLR2, RSW, wood} (see also \cite{KW88}).
In particular, the complete sets of simple ordinary local and Ramond twisted $L_\ell(\osp_{1|2})$-modules are given by highest weight representations
\begin{align*}
    &\irr{loc}{L_\ell(\osp_{1|2})}=\obj{L_\ell(\lambda), \Pi L_\ell(\lambda) \mid  \lambda=0,1,\dots, \lfloor\tfrac{p-2}{2}\rfloor} ,\quad \\
    &\irr{R}{L_\ell(\osp_{1|2})}=\obj{L_\ell^R(\lambda), \Pi L_\ell^R(\lambda) \mid \lambda=0,1,\dots, \lfloor\tfrac{p-3}{2}\rfloor}
\end{align*}
where $\Pi L_\ell(\lambda)$ (resp.\ $\Pi L_\ell^R(\lambda)$) is the parity reversal of $L_\ell(\lambda)$ (resp.\ $L_\ell^R(\lambda)$) and
\begin{align*}
    p = \begin{cases}
        {2\ell+3} & (\ell \in \Z_{\geq0}),\\
        2(2\ell+3) & (\ell \in \mathbb Z_{\geq0} + \frac{1}{2}).
    \end{cases} 
\end{align*}

The parity decomposition of a (local or Ramond twisted) $L_\ell(\osp_{1|2})$-module is denoted by $M=\ev{M}\oplus\od{M}$. 
Note that $\ev{\Pi M}\simeq \od{M}$ and $\od{\Pi M}\simeq \ev{M}$.
The top spaces of these direct summands are naturally modules over $\sll_2\subset \osp_{1|2}$.
By using the conformal weights of the top spaces of $L_\ell(\lambda)$ and $L_\ell^R(\lambda)$, respectively given by
$$h_\lambda = \frac{\lambda(\lambda+1)}{4(\ell+\frac{3}{2})},\qquad 
h^R_\lambda = \frac{(\lambda+\frac{1}{2})(\lambda+\frac{3}{2})}{4(\ell+\frac{3}{2})} -\frac{1}{8},$$
the $\sll_2$-module structure and the conformal weights of these top spaces are summarized in Table~\ref{tab:simple modules vira orbifold}.
\begin{table}[h!]
	\centering
	\renewcommand{\arraystretch}{1.3}
	\begin{tabular}{c|cccc}
		\hline
		& $\ev{L_{\ell}(\lm)}$ & $\od{L_{\ell}(\lm)}$ & $\ev{L_{\ell}^R(\lm)}$ & $\od{L_{\ell}^R(\lm)}$   \\ \hline
        top space & $\rho_{\lambda\omega}$ & \ttwolines{${\rho_{(\lambda-1)\omega}} $}{$(\lm \neq 0)$}{${\rho_{\omega}}$}{$(\lm = 0)$} & $\rho_{\lambda\omega}$ & \threelines{$\rho_{(\lambda+1)\omega}\oplus \rho_{(\lambda-1)\omega}$ }{$(\lm\neq 0,\ell)$}{$\rho_{\omega}$}{$(\lm=0)$}{$\rho_{(\ell-1)\omega}$}{$(\lm=\ell)$}\\ \hline
        $\Delta^{\mathrm{top}}$ & $h_\lm $ & \ttwolines{$h_\lm$}{$(\lm \neq 0)$}{$1$}{$(\lm = 0)$} & $h_\lambda^R$ & $h_\lambda^R+\tfrac{1}{2}$  \\ \hline
	\end{tabular}
 \captionsetup{font=small }
	\caption{The $\sll_2$-module structure and conformal weight of the top spaces. } 
    \label{tab:simple modules vira orbifold}
\end{table}

On the other hand, the $r$-free fermion vertex superalgebra $\mathcal{F}^r$ decomposes into
\begin{align*}
    \mathcal{F}^r\simeq L_1(0) \oplus L_1(\varpi_1)
\end{align*}
as a $L_1(\so_{r})$-module. 

The integer lattice $U=\mathbb Z \epsilon_1 \oplus \dots \oplus  \mathbb Z \epsilon_n$ equipped with the bilinear form such that $(\epsilon_i, \epsilon_j) = \delta_{i, j}$ provides a convenient realization for some data of $\so_r$:
\begin{itemize}
    \item (Type $D$) $r=2s$:
    \begin{align*}
        \alpha_i=\begin{cases}
            \epsilon_i - \epsilon_{i+1} & (1\leq i \leq s-2)\\
            \epsilon_{s-1} - \epsilon_{r'} & (i=s-1)\\
            \epsilon_{s-1} + \epsilon_{r'} & (i=s)
            \end{cases},\quad 
        \omega_i=\begin{cases}
            \epsilon_1+\cdots + \epsilon_{i}  & (1\leq i \leq s-2)\\
           \frac{1}{2}(\epsilon_1 + \dots + \epsilon_{s-1}-\epsilon_s) & (i=s-1)\\
            \frac{1}{2}(\epsilon_1 + \dots + \epsilon_{s-1}+\epsilon_s) & (i=s)
            \end{cases},
    \end{align*}
    and the highest root $\theta$ and the Weyl vector $\rho$ are respectively
    $$\theta = \epsilon_1+\epsilon_2,\qquad \rho = (s-1) \epsilon_1 + (s-2)\epsilon_2 + \dots + \epsilon_{s-1}.$$
    \item (Type $B$) $ r=2s+1$:
    \begin{align*}
        \alpha_i=\begin{cases}
            \epsilon_i - \epsilon_{i+1} & (1\leq i \leq s-1)\\
            \epsilon_{s} & (i=s)
            \end{cases},\quad 
        \omega_i=\begin{cases}
            \epsilon_1+\cdots +\epsilon_{i}  & (1\leq i \leq s-1)\\
            \frac{1}{2}(\epsilon_1 + \dots + \epsilon_{s-1}+\epsilon_s) & (i=s)
            \end{cases},
    \end{align*}
    and the highest root $\theta$ and the Weyl vector $\rho$ are respectively
    $$\theta = \epsilon_1+\epsilon_2,\qquad \rho = \tfrac{1}{2}\left( (2s-1) \epsilon_1 + (2s-3)\epsilon_2 + \dots + \epsilon_{s}\right).$$
\end{itemize}

Depending on the parity of $N=2n, 2n+1$ and thus $r=2n-4,2n-3$, the complete set of simple ordinary $L_1(\so_{r})$-modules is given by (see for instance \cite{Kac}):
\begin{itemize}
    \item (Type $D$) $N=2n$:
    $$\irr{}{L_1(\so_{r})}=\obj{L_1(0), L_1(\omega_1), L_1(\omega_{n-3}), L_1(\omega_{n-2})},$$
    whose conformal dimensions are $h_0=0$, $h_1=\frac{1}{2}$, $h_{n-3}=\frac{n-2}{8}$, $h_{n-2}=\frac{n-2}{8}$, respectively.
    \item (Type $B$) $N=2n+1$:
    $$\irr{}{L_1(\so_{r})}=\obj{L_1(0), L_1(\omega_1), L_1(\omega_{n-2})},$$
    whose conformal dimensions are $h_0=0$, $h_1=\frac{1}{2}$, $h_{n-2}= \frac{2n-3}{16}$, respectively.
\end{itemize}
Then $\mathcal{F}^r$ admits a unique simple ordinary local module $\mathbb{L}$ and a unique simple ordinary Ramond twisted module $\mathbb{L}^R$, whose parity decompositions are
\begin{align*}
    \mathbb{L}=L_1(0)\oplus L_1(\omega_1),\qquad \mathbb{L}^R=\begin{cases} L_1(\omega_{n-3})\oplus L_1(\omega_{n-2}) & (N=2n),\\    L_1(\omega_{n-2}) \oplus L_1(\omega_{n-2})& (N=2n+1). \end{cases}
\end{align*}

Now, we introduce the following simple $\W_{-1}(\so_{N},\minO)$-modules
\begin{align*}
      &\mathbb{L}^+(\lm)=\ev{(L_\ell(\lm)\otimes \mathbb{L}) },
      &&\mathbb{L}^-(\lm)=\od{(L_\ell(\lm)\otimes \mathbb{L}) },  \\
      &\mathbb{L}^{+}_R(\lm)=\ev{(L^R_\ell(\lm)\otimes \mathbb{L}^R) },  &&\mathbb{L}^{-}_R(\lm)=\od{(L^R_\ell(\lm)\otimes \mathbb{L}^R) }. 
\end{align*}
The classification of the simple $\W_{-1}(\so_{N},\minO)$-modules is now straightforward by using \eqref{constructing simple modules}.
\begin{theorem}\label{Classification of simples via orbifolds}
The complete set of simple $\W_{-1}(\so_{N},\minO)$-modules is given by 
\begin{itemize}
    \item (Type $D$) $N=2n$:
    $$\mathrm{Irr}=\obj{\mathbb{L}^+(\lm),\mathbb{L}^-(\lm) \mid  \lambda=0,1,\dots, n-3}\cup \obj{\mathbb{L}^+_R(\lm), \mathbb{L}^-_R(\lm)\mid  \lambda=0,1,\dots, n-3}.$$
    \item (Type $B$) $N=2n+1$:
    $$\mathrm{Irr}=\obj{\mathbb{L}^+(\lm),\mathbb{L}^-(\lm)\mid  \lambda=0,1,\dots, 2n-3}\cup \obj{\mathbb{L}^{-}_R(\lm)\mid  \lambda=0,1,\dots, 2n-4}.$$
\end{itemize}
\end{theorem}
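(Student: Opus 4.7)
The plan is to leverage the isomorphism of Theorem \ref{coset construction} together with the general orbifold classification result Theorem \ref{classification of simples} in the appendix. Concretely, since
\begin{align*}
\W_{-1}(\so_N,\minO)\simeq (L_\ell(\osp_{1|2})\otimes \mathcal{F}^r)_{\overline{0}},\qquad \ell=-1+\tfrac{r}{2},\ r=N-4,
\end{align*}
every simple ordinary $\W_{-1}(\so_N,\minO)$-module is, by Theorem \ref{classification of simples}, one of the parity components $\ev{(M\otimes N)}$ or $\od{(M\otimes N)}$, where $M$ and $N$ run over simple ordinary modules of $L_\ell(\osp_{1|2})$ and $\mathcal{F}^r$ respectively that are simultaneously either local (Neveu-Schwarz) or Ramond-twisted. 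So the strategy is: list the inputs $M,N$, form the products via \eqref{constructing simple modules}, and then identify exactly which pairs yield isomorphic $\W$-modules.

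First I would fix the ranges of $\lambda$. Using the formula for $p$ recalled from \cite{CFK, CGL, CKLR2, RSW, wood}, in the D-type case $r=2n-4$ gives $\ell=n-3\in\Z_{\geq 0}$ and $p=2n-3$, so the local parameter $\lambda$ runs over $\{0,\dots,n-3\}$ and the Ramond parameter over $\{0,\dots,n-3\}$; in the B-type case $r=2n-3$ gives $\ell=n-5/2\in\Z_{\geq 0}+\tfrac{1}{2}$ and $p=4n-4$, so the local range is $\{0,\dots,2n-3\}$ and the Ramond range is $\{0,\dots,2n-4\}$. These coincide exactly with the ranges stated in the theorem. Combined with the uniqueness of $\mathbb{L}$ and $\mathbb{L}^R$ among simple ordinary $\mathcal{F}^r$-modules, \eqref{constructing simple modules} immediately produces the candidate modules $\mathbb{L}^\pm(\lambda)$ and $\mathbb{L}^\pm_R(\lambda)$, and \emph{a priori} exhausts the list.

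The remaining task is to decide when two candidates are isomorphic. Since $\Pi(A\otimes B)\simeq \Pi A\otimes B$, the key observation is that $\mathbb{L}^+_\bullet(\lambda)\simeq \mathbb{L}^-_\bullet(\lambda)$ (local or Ramond) precisely when $\Pi\mathbb{L}_\bullet\simeq \mathbb{L}_\bullet$ as $\mathcal{F}^r$-modules. For the local module $\mathbb{L}=\mathcal F^r$, the two parity components are $L_1(0)$ and $L_1(\omega_1)$, hence non-isomorphic as $L_1(\so_r)$-modules in both types, so $\mathbb{L}^+(\lambda)\not\simeq \mathbb{L}^-(\lambda)$ always. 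For the Ramond module, the two parity components are $L_1(\omega_{n-3})\oplus L_1(\omega_{n-2})$ in the D-type (non-isomorphic) and $L_1(\omega_{n-2})\oplus L_1(\omega_{n-2})$ in the B-type (isomorphic). In the B-type case the zero mode of the odd fermion $\psi_r$, which squares to $\tfrac12$, is an explicit odd automorphism exhibiting $\Pi\mathbb{L}^R\simeq \mathbb{L}^R$, and this forces $\mathbb{L}^+_R(\lambda)\simeq\mathbb{L}^-_R(\lambda)$; in the D-type case no such identification exists and all four families remain distinct.

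The main technical point, and the step that requires the most care, is this last identification of Ramond modules in B-type: one must check that the odd-fermion zero mode really does induce a module isomorphism compatible with the even subalgebra action, so that the claim $\Pi\mathbb{L}^R\simeq \mathbb{L}^R$ holds at the level of $\mathcal{F}^r$-modules and not merely at the level of $L_1(\so_r)$-modules. Modulo this check, the remaining verifications are routine comparisons of top spaces (recorded in Table \ref{tab:simple modules vira orbifold}) to confirm that the four families in the D-type case, and the three families in the B-type case, are pairwise inequivalent, which completes the classification.
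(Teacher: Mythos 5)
Your proposal follows essentially the same route as the paper: the paper's own ``proof'' is the single sentence that the classification is straightforward from \eqref{constructing simple modules} together with the known simple ordinary (local and Ramond) modules of $L_\ell(\osp_{1|2})$ and the uniqueness of $\mathbb{L}$, $\mathbb{L}^R$ over $\mathcal{F}^r$, plus the remark that $\mathbb{L}^+_R(\lm)\simeq\mathbb{L}^-_R(\lm)$ in type $B$ because $\mathbb{L}^R_{\overline{0}}\simeq\mathbb{L}^R_{\overline{1}}$; your range computations for $\lambda$ and your criterion for when the two parity components coincide reproduce exactly this. One small inaccuracy: the zero mode $\psi_{r,0}$ is \emph{not} literally an odd module automorphism of $\mathbb{L}^R$, since an odd endomorphism must supercommute with the odd mode $\psi_{r,0}$ itself, whereas $\psi_{r,0}\psi_{r,0}+\psi_{r,0}\psi_{r,0}=1\neq0$; the correct (and true) statement is that for $r$ odd the graded Ramond module of $\mathcal{F}^r$ is of queer type --- its graded endomorphism algebra is a rank-one Clifford algebra --- so it admits an odd automorphism built from the full Clifford structure of the zero modes, which is what forces $\Pi\mathbb{L}^R\simeq\mathbb{L}^R$ and hence $\mathbb{L}^+_R(\lm)\simeq\mathbb{L}^-_R(\lm)$. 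This is the same fact the paper invokes (equally tersely) via the decomposition $\mathbb{L}^R\simeq L_1(\varpi_{n-2})\oplus L_1(\varpi_{n-2})$, so your argument is sound once that one operator is replaced by the correct odd endomorphism.
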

Note that in the type $B$ case, we have $\mathbb{L}^{+}_R(\lm)\simeq \mathbb{L}^{-}_R(\lm)$ as $\W_{-1}(\so_{N},\minO)$-modules since $\mathbb{L}^R_{\overline{0}}\simeq \mathbb{L}^R_{\overline{1}}\simeq L_1(\varpi_{n-2})$.
We summarize, for each simple module, the structure of the top space and their conformal weights in Table \ref{simple modules vira orbifold} below.
\begin{table}[h!]
	\centering
	\renewcommand{\arraystretch}{1.3}
	\begin{tabular}{c|cccc}
		\hline
		& $\mathbb{L}^+(\lm)$ & $\mathbb{L}^-(\lm)$ & $\mathbb{L}^-_R(\lm)$ & $\mathbb{L}^+_R(\lm)~(N=2n)$   \\ \hline
        $\rho^{\mathrm{top}}$ & $\rho_{\lambda\omega} \otimes \rho_0$ & \ttwolines{$\rho_{(\lambda-1)\omega}\otimes \rho_0$}{$(\lm \neq0)$}{$\rho_0 \otimes \rho_{\omega_1}$}{$(\lm =0)$} &
        $\rho_{\lambda\omega} \otimes \rho_{\omega_{n-2}}$  & 
        $\rho_{\lambda\omega} \otimes \rho_{\omega_{n-3}}$\\ \hline
        $\Delta^{\mathrm{top}}$ & $h_\lm $ & \ttwolines{$h_\lm$}{$(\lm \neq0)$}{$1/2$}{$(\lm=0)$} & 
        $h_\lambda^R + h_{n-2}$ & $h_\lambda^R + h_{n-3}$ \\ \hline
	\end{tabular}
 \captionsetup{font=small }
	\caption{The $\sll_2$-module structure of top spaces of simple $\W_{-1}(\so_{N},\minO)$-modules and their conformal weights. }
    \label{simple modules vira orbifold}
\end{table}

\subsection{Irreducible modules via reduction}\label{QHR-description}

Using the explicit realization of the fundamental weights for $\so_N$ given in \S\ref{orbifold}, we obtain the following data on the conformal weights $h_\lambda^\sharp$ in \eqref{conformal weight of minimal reduction} at $k=-1$ that will be useful later:
\begin{itemize}
    \item (Type $D$) $N=2n$: 
    \begin{equation}\label{conformal weights D}
    \begin{split}
        &h^\sharp_{\lambda_1\omega_1 + \lambda_2\omega_2}
        = \frac{1}{2(2n-3)}\left( \lambda_1^2 + 2\lambda_2^2 + 2 \lambda_1\lambda_2 + \lambda_1 \right), \\
        &h^\sharp_{\lambda_1\omega_1 + \lambda_2\omega_2+ \omega_3}, 
        = \frac{1}{2(2n-3)}\left( \lambda_1^2 + 2\lambda_2^2 + 2 \lambda_1\lambda_2 + 3\lambda_1 + 4\lambda_2 \right) + \frac{1}{2},  \\
        &h^\sharp_{\lambda_1\omega_1 + \lambda_2\omega_2+ \omega_{n-1}} 
        = \frac{1}{2(2n-3)}\left( \lambda_1^2 + 2\lambda_2^2 + 2 \lambda_1\lambda_2 + 2\lambda_1 + 2\lambda_2 + \frac{2n^2-9n+12}{4} \right),   \\
        &h^\sharp_{\lambda_1\omega_1 + \lambda_2\omega_2+ \omega_{n}}
        = \frac{1}{2(2n-3)}\left( \lambda_1^2 + 2\lambda_2^2 + 2 \lambda_1\lambda_2 + 2\lambda_1 + 2\lambda_2 + \frac{2n^2-9n+12}{4} \right).
    \end{split}
\end{equation}
    \item (Type $B$) $N=2n+1$:
\begin{equation}\label{conformal weights B}
    \begin{split}
        &h^\sharp_{\lambda_1\omega_1 + \lambda_2\omega_2} 
        = \frac{1}{4(n-1)}\left( \lambda_1^2 + 2\lambda_2^2 + 2 \lambda_1\lambda_2 + \lambda_1  \right),  \\
        &h^\sharp_{\lambda_1\omega_1 + \lambda_2\omega_2+ \omega_3} 
        = \frac{1}{4(n-1)}\left( \lambda_1^2 + 2\lambda_2^2 + 2 \lambda_1\lambda_2 + 3\lambda_1 + 4\lambda_2  \right) + \frac{1}{2},  \\
        &h^\sharp_{\lambda_1\omega_1 + \lambda_2\omega_2+ \omega_{n}} 
        = \frac{1}{4(n-1)}\left( \lambda_1^2 + 2\lambda_2^2 + 2 \lambda_1\lambda_2 + 2\lambda_1 + 2\lambda_2 + \frac{2n^2-7n+8}{4} \right).
    \end{split}
\end{equation}
\end{itemize}

The simple $\W_{-1}(\so_{N},\minO)$-modules in Theorem \ref{Classification of simples via orbifolds} can be realized via the BRST reduction.
\begin{proposition}\label{simples via reduction}
There are isomorphisms of $\W_{-1}(\so_{N},\minO)$-modules
\begin{itemize}
    \item (Type $D$) $N=2n$:
    \begin{align*}
    &\mathbb{L}^+(\lm)\simeq \HH_{\OO}^0(L_{-1}(\lm \omega_1)),&
    &\mathbb{L}^-(\lm)\simeq \ccases{\HH_{\OO}^0(L_{-1}(\omega_3))}{(\lm=0)}{\HH_{\OO}^0(L_{-1}( (\lm-1)\omega_1+\omega_2))}{(\lm\neq0)},\\
    &\mathbb{L}^+_R(\lm)\simeq \HH_{\OO}^0(L_{-1}(\lm \omega_1+\omega_{n-1})),&
    &\mathbb{L}^-_R(\lm)\simeq  \HH_{\OO}^0(L_{-1}(\lm \omega_1+\omega_n)). 
    \end{align*}
    \item (Type $B$) $N=2n+1$:
    \begin{align*}
    &\mathbb{L}^+(\lm)\simeq \HH_{\OO}^0(L_{-1}(\lm \omega_1)),&
    &\mathbb{L}^-(\lm)\simeq \ccases{\HH_{\OO}^0(L_{-1}(\omega_3))}{(\lm=0)}{\HH_{\OO}^0(L_{-1}( (\lm-1)\omega_1+\omega_2))}{(\lm\neq0)},\\
    &&&\mathbb{L}^-_R(\lm)\simeq  \HH_{\OO}^0(L_{-1}(\lm \omega_1+\omega_n)).
    \end{align*}
\end{itemize}
\end{proposition}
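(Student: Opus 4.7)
The plan is to apply Theorem~\ref{A05: minimal reduction} to each $L_{-1}(\lm)$ listed in the proposition and match the resulting simple $\W_{-1}(\so_N,\minO)$-module against the orbifold classification of Theorem~\ref{Classification of simples via orbifolds}. Since a simple ordinary $\W$-module is determined by the pair $(\lm^\sharp,h_\lm^\sharp)$ consisting of its top $\g^\sharp$-weight and top conformal weight, this reduces to three checks: non-vanishing of the reduction, agreement of $\lm^\sharp$ with $\rho^{\mathrm{top}}$ in Table~\ref{simple modules vira orbifold}, and agreement of $h_\lm^\sharp$ with $\Delta^{\mathrm{top}}$.

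Non-vanishing is immediate: each $\lm$ appearing on a right-hand side is dominant with $(\lm,\theta) \geq 0$ for $\theta = \epsilon_1+\epsilon_2$, so $\widehat{\lm}(\alpha_0^\vee) = -1-(\lm,\theta) \notin \Z_{\geq 0}$, and Theorem~\ref{A05: minimal reduction} produces a non-zero simple reduction. For the top weight, identify $\h^\sharp$ with the hyperplane in $\h$ orthogonal to $\theta$, spanned by $\epsilon_1-\epsilon_2$ (the $\sll_2$ direction) together with $\epsilon_3,\ldots,\epsilon_n$ (the $\so_{N-4}$ Cartan). Expanding each $\lm$ in the $\epsilon_i$-basis and restricting to $\h^\sharp$ produces the $\sll_2\oplus\so_{N-4}$-weight, which a direct case inspection matches with the entries of Table~\ref{simple modules vira orbifold}: the symmetric choices $\lm_1\omega_1$ and $(\lm_1-1)\omega_1+\omega_2$ give the trivial $\so_{N-4}$-weight, $\omega_3$ contributes the vector representation weight $\omega_1$, and the $\so_N$-spinor-containing choices $\lm_1\omega_1+\omega_{n-1}$, $\lm_1\omega_1+\omega_n$ in type $D$ (respectively $\mu\omega_1+\omega_n$ in type $B$) produce tensor products with the two spinor weights of $\so_{N-4}$.

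Agreement of conformal weights is obtained by applying \eqref{conformal weights D} and \eqref{conformal weights B} and comparing with $\Delta^{\mathrm{top}}$ of Table~\ref{simple modules vira orbifold}. The non-twisted cases reduce immediately, e.g., $h^\sharp_{\lm_1\omega_1} = \lm_1(\lm_1+1)/(2(2n-3)) = h_{\lm_1}$ in type $D$, and analogously for $(\lm_1-1)\omega_1+\omega_2$ and $\omega_3$. The Ramond-twisted identifications boil down to the polynomial identities $2n^2-9n+9 = (2n-3)(n-3)$ in type $D$ and $2n^2-7n+5 = (2n-5)(n-1)$ in type $B$, which are precisely what is needed to equate the spinor contribution to $h^\sharp_\lm$ with the Ramond shift $h_{\lm_1}^R+h_{n-3}$ (or $h_{\lm_1}^R+h_{n-2}$) on the orbifold side.

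Putting these three checks together, each listed $\lm$ produces a distinct simple $\W_{-1}(\so_N,\minO)$-module whose top data coincide with one of the orbifold modules of Theorem~\ref{Classification of simples via orbifolds}, which together exhaust the full list; the asserted isomorphisms follow. The main obstacle is the Ramond-twisted case: the spinor weights of $\so_N$ contribute fractional terms to both $(\lm,\lm+2\rho)$ and $(\lm,\theta)$, and reconciling these with the $-\tfrac{1}{8}$ shift in $h_\lm^R$ together with the conformal weight of the $\so_{N-4}$-spinor module requires the polynomial identities above, without which the bookkeeping does not collapse cleanly.
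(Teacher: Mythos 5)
Your proposal is correct and follows essentially the same route as the paper: both reduce the claim to matching the top $\g^\sharp$-weight and top conformal weight of $\HH^0_{\OO}(L_{-1}(\Lambda))$, computed via Theorem~\ref{A05: minimal reduction} and \eqref{conformal weights D}--\eqref{conformal weights B}, against the data of Table~\ref{simple modules vira orbifold}, after the same non-vanishing check $\widehat{\Lambda}(\alpha_0^\vee)=-(\Lambda,\theta)-1\notin\Z_{\geq0}$. The only cosmetic difference is direction --- the paper uses conditions (I) and (II) to solve for $\Lambda$ case by case, whereas you verify the listed $\Lambda$'s directly (your polynomial identities $2n^2-9n+9=(2n-3)(n-3)$ and $2n^2-7n+5=(2n-5)(n-1)$ are exactly the cancellations occurring in the paper's Ramond cases).
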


\proof
For a dominant integral weight $\Lambda=\lm_1 \omega_1+\cdots + \lm_n \omega_n$ with $\lm_i\in \Z_{\geq0}$, $\widehat{\Lambda}(\alpha_0^\vee)=-(\Lambda,\theta)-1\notin \Z_{\geq0}$ holds.
Theorem \ref{A05: minimal reduction} implies that $\mathbb{L}^\pm(\lm)\simeq \HH_{\OO}^0(L_{-1}(\Lambda))$ (or $\mathbb{L}^\pm_R(\lm)\simeq \HH_{\OO}^0(L_{-1}(\Lambda))$), for a certain dominant integral weight $\Lambda$, if and only if the highest weights and conformal weights of the top spaces of both-hand sides are the same, \ie\
$$\text{(I)}\quad  \rho_{\Lambda^\sharp}\simeq \rho^{\mathrm{top}},\qquad \text{(II)}\quad h^\sharp_\Lambda=\Delta^{\mathrm{top}}.$$
We do this case-by-case, using Table \ref{simple modules vira orbifold}, \eqref{conformal weights D} and \eqref{conformal weights B}, and 
\begin{align*}
    \Lambda^\sharp|_{\sll_2}=\lm_1 \omega,\qquad \Lambda^\sharp|_{\so_{N-4}}=\lambda_3 \omega_1 + \dots + \lambda_n \omega_{n-2}.
\end{align*}
\begin{itemize}[leftmargin=12pt]\setlength{\itemsep}{10pt}
    \item (Type $D$) $N=2n$:
\begin{enumerate}
    \item[(1)] $\mathbb{L}^+(\lm)$ with $0\leq \lm \leq n-3$:
    
    (I) implies $\Lambda = \lambda \omega_1 + \lambda_2 \omega_2$, and then (II) does $\lm_2=0$, \ie\ $\Lambda=\lm \omega_1$.
    
    \item[(2-1)] $\mathbb{L}^-(\lm)$ with $1\leq \lm \leq n-3$:

    (I) implies $\Lambda = (\lambda-1)\omega_1 + \lambda_2\omega_2$, and then (II) does $\lm_2=1$, \ie\ $\Lambda=(\lambda-1)\omega_1 + \omega_2$. 

    \item[(2-2)] $\mathbb{L}^-(\lm)$ with $\lm=0$:

   (I) implies $\Lambda =  \lambda_2\omega_2 + \omega_3$, and then (II) does $\lm_2=0$, \ie\ $\Lambda=\omega_3$.
   
    \item[(3)] $\mathbb{L}^+_R(\lm)$ with $1\leq \lm \leq n-3$:

    (I) implies $\Lambda = \lambda \omega_1 + \lambda_2 \omega_2 + \omega_{n-1}$, and then (II) does $\lm_2=0$, \ie\ $\Lambda=\lambda \omega_1 + \omega_{n-1}$.
    
    \item[(4)] $\mathbb{L}^-_R(\lm)$ with $1\leq \lm \leq n-3$:

    (I) implies $\Lambda = \lambda \omega_1 + \lambda_2 \omega_2 + \omega_{n}$, and then (II) does $\lm_2=0$, \ie\ $\Lambda= \lambda \omega_1 + \omega_{n}$. 
\end{enumerate}
\item (Type $B$) $N=2n+1$:
\begin{enumerate}
    \item[(1)] $\mathbb{L}^+(\lm)$ with $0\leq \lm \leq 2n-3$:
    
   (I) implies $\Lambda = \lambda \omega_1 + \lambda_2 \omega_2$, and then (II) does $\lm_2=0$, \ie\ $\Lambda=\lm \omega_1$.
   
    \item[(2-1)] $\mathbb{L}^-(\lm)$ with $1\leq \lm \leq 2n-3$:

    (I) implies $\Lambda = (\lambda-1)\omega_1 + \lambda_2\omega_2$, and then (II) does $\lm_2=1$, \ie\ $\Lambda=(\lambda-1)\omega_1 + \omega_2$.

    \item[(2-2)] $\mathbb{L}^-(\lm)$ with $\lm=0$:

    (I) implies $\Lambda =  \lambda_2\omega_2 + \omega_3$, and then (II) does $\lm_2=0$, \ie\ $\Lambda=\omega_3$.

    \item[(3)] $\mathbb{L}^-_R(\lm)$ with $1\leq \lm \leq 2n-4$:

    (I) implies $\Lambda = \lambda \omega_1 + \lambda_2 \omega_2 + \omega_{n}$, and then (II) does $\lm_2=0$, \ie\ $\Lambda= \lambda \omega_1 + \omega_{n}$. 
\end{enumerate}
\end{itemize}
This completes the proof.
\endproof

\appendix
\section{Categorical aspects of vertex superalgebras}\label{appendix}
Let $V$ be a simple vertex algebra equipped with a conformal vector and $\sC$ a vertex tensor category of $V$-modules. Hence, $\sC$ is a braided tensor category with a tensor product $\boxtimes$ and a unit $\one$. The associators and the braidings are respectively denoted by 
$$\mathcal{A}_{X,Y,Z}\colon X \boxtimes(Y \boxtimes Z)\simeq (X \boxtimes Y) \boxtimes Z,\quad \mathcal{B}_{X,Y}\colon X \boxtimes Y \simeq Y \boxtimes X.$$

Let $A=\ev{A}\oplus \od{A}$, with $\ev{A}=V$, be a simple vertex superalgebra in $\sC$. 
Then $J=\od{A}$ is a simple current, \ie\ a simple $V$-module such that $J \boxtimes J \simeq \one$. 
Indeed, $A$ is naturally identified with a commutative algebra object $\ub{A}=(\one, J)$ in the larger category  $\ub{\sSC}$ defined as
the braided tensor \emph{super}category 
whose objects are pairs $\bX=(X_0, X_1)$ in $\sC\times\sC$ -- the parity decomposition of $\bX$\ -- with even morphisms $f\colon \bX\rightarrow \bY$, 
$$f=(f_0, f_1),\qquad f_0: X_0 \rightarrow Y_0,\quad f_1: X_1 \rightarrow Y_1.$$
The unit in $\ub{\sSC}$ is $\ub{\one}=(\one,0)$, the tensor product is again $\boxtimes$ preserving the parity,
$$\bX\boxtimes\bY=(X_0\boxtimes Y_0\oplus X_1\boxtimes Y_1,X_0\boxtimes Y_1\oplus X_1\boxtimes Y_0),$$
and the braiding is modified to respect the parity,
$$\mathcal{B}_{\bX,\bY}\colon \bX \boxtimes \bY \simeq \bY \boxtimes \bX,\quad \mathcal{B}_{\bX,\bY}|_{X_p\boxtimes Y_q} =(-1)^{p q} \mathcal{B}_{X_p,Y_q}.$$

Following \cite{CKL, CKM}, denote by $\ub{\sSC}_A$ the category of $\ub{A}$-module objects lying in $\ub{\sSC}$ equipped with parity-preserving morphisms. 
The objects in $\ub{\sSC}_A$ are pairs $(\bX, m_\bX)$ with $\bX$ in $\ub{\sSC}$ equipped with a structure map
$$m_\bX: \ub{A} \boxtimes \bX \rightarrow \bX$$
satisfying compatibility conditions with the multiplication $\mu\colon \ub{A} \boxtimes \ub{A} \rightarrow \ub{A}$.
The morphisms ${f: \bX \rightarrow \bY}$ in $\ub{\sSC}_A$ are those in $\ub{\sSC}$ that commute with the structure maps, \ie\ $m_\bY \circ (\text{id}_{\ub{A}} \boxtimes f) = f \circ m_\bX$. 

Since $J \boxtimes J \simeq \one$, {arbitrary $\ub{\sSC}_A$-modules are direct sums of two objects $\bX \oplus \bY$} satisfying the following properties:
\begin{equation*}
\vcenter{\xymatrix@R=1em{ \ub{J} \boxtimes \bX  \ar[rr]^-{\mathcal{M}_{\ub{J},\bX}} \ar[rd]_{m_\bX}&& \ub{J} \boxtimes \bX   \ar[ld]^{m_\bX} &\ub{J} \boxtimes \bY  \ar[rr]^-{(-1)^\bullet \mathcal{M}_{\ub{J},\bY}} \ar[rd]_{m_\bY}&& \ub{J} \boxtimes \bY   \ar[ld]^{m_\bY}\\ & \bX & && \bY &.}}
\end{equation*}
where $\ub{J}=(0,J)$, $\mathcal{M}_{\ub{J},\bullet}=\mathcal{B}_{\bullet,\ub{J}}\circ \mathcal{B}_{\ub{J},\bullet}$ is the monodromy, and $(-1)^\bullet$ denotes the parity.
The objects satisfying the first property are called the local (or Neveu-Schwarz) $\ub{A}$-modules corresponding to modules over the vertex superalgebra $A$, while the objects satisfying the second property are called twisted (or Ramond) $\ub{A}$-modules. Thus, $\ub{\sSC}_A$ decomposes into two full subcategories 
\begin{align*}
    \ub{\sSC}_A=\ub{\sSC}_A^{\mathrm{loc}} \oplus \ub{\sSC}_A^{\mathrm{tw}},
\end{align*}
consisting of local $\ub{A}$-modules and twisted $\ub{A}$-modules. 
By \cite{CKM}, $\ub{\sSC}_A$ is a tensor (super)category and $\ub{\sSC}_A^{\mathrm{loc}}$ is a braided tensor category.

Note that we have an induction functor
$$
\mathcal F: \sC \rightarrow \ub{\sSC}_A,\qquad
X \mapsto (\ub{A} \boxtimes (X, 0), \mu \boxtimes (\id_X, 0)), \qquad
f \mapsto \id_{\ub{A}} \boxtimes (f, 0),
$$
whose right adjoint is the forgetful functor $\mathcal G: \ub{\sSC}_A \rightarrow \sC$. 
In particular, Frobenius reciprocity holds
\begin{align*}
    \Frob_{\mathcal F(X), (\bY, m_\bY)}:\Hom_{\ub{\sSC}_A}(\mathcal F(X), (\bY, m_\bY))\overset{\simeq}{\longrightarrow}
\Hom_{\ub{\sSC}}((X, 0), \bY) \simeq 
\Hom_{\sC}(X, Y_0).
\end{align*}

\begin{theorem}\label{classification of simples}
Let $A=V \oplus J$ be a simple vertex superalgebra in a braided tensor category $\sC$ of $V$-modules as above. Then, the induction functor  $\mathcal F: \sC \rightarrow \ub{\sSC}_A$ gives an equivalence of tensor categories. Thus, there is an bijection on the sets of inequivalent simple objects $\mathrm{Irr}(\sC)\simeq \mathrm{Irr}(\ub{\sSC}_A)$.
\end{theorem}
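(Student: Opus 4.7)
The plan is to establish that the induction functor $\mathcal{F}$ is fully faithful and essentially surjective; its compatibility with the tensor product then follows from the general formalism of induction from a commutative algebra object in a braided tensor (super)category, and the bijection on isomorphism classes of simple objects is an immediate consequence.

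A direct computation gives $\mathcal{F}(X) = \ub{A} \boxtimes (X, 0) = (X, J \boxtimes X)$, with structure map inherited from $\mu$. In particular, $\mathcal{F}(Y)$ has even component $Y$, so Frobenius reciprocity yields
\[
\Hom_{\ub{\sSC}_A}(\mathcal{F}(X), \mathcal{F}(Y)) \simeq \Hom_{\ub{\sSC}}((X, 0), \mathcal{F}(Y)) \simeq \Hom_\sC(X, Y),
\]
and this is precisely the map induced by $\mathcal{F}$ on morphisms. Hence $\mathcal{F}$ is fully faithful.

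For essential surjectivity, take $(\bY, m_\bY) \in \ub{\sSC}_A$ with $\bY = (Y_0, Y_1)$, and write $\alpha \colon J \boxtimes Y_0 \to Y_1$ and $\beta \colon J \boxtimes Y_1 \to Y_0$ for the odd components of $m_\bY$. Frobenius reciprocity applied to $\id_{Y_0} \in \Hom_\sC(Y_0, Y_0)$ produces a morphism $\phi \colon \mathcal{F}(Y_0) \to (\bY, m_\bY)$ in $\ub{\sSC}_A$ whose even part is $\id_{Y_0}$ and whose odd part is $\alpha$; it then suffices to show $\alpha$ is an isomorphism. Since $A$ is simple, $\mu|_{J \boxtimes J} \colon J \boxtimes J \to \one$ is a nonzero morphism between simple objects (using $J \boxtimes J \simeq \one$), hence an isomorphism by Schur. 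Applying the module-associativity axiom for $m_\bY$ to $(J \boxtimes J) \boxtimes Y_0$ then forces $\beta \circ (\id_J \boxtimes \alpha)$ to coincide, up to an associator, with the canonical isomorphism $(J \boxtimes J) \boxtimes Y_0 \simeq Y_0$; symmetrically, applying it to $(J \boxtimes J) \boxtimes Y_1$ shows $\alpha \circ (\id_J \boxtimes \beta)$ is an isomorphism. Consequently $\id_J \boxtimes \alpha$ is a split monomorphism and $\alpha$ is a split epimorphism; since $J \boxtimes -$ is an autoequivalence, the first statement implies $\alpha$ is also a split monomorphism, and therefore $\alpha$ is an isomorphism.

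The main subtlety in executing this plan lies in the careful bookkeeping of associators and of the parity-twisted braiding in $\ub{\sSC}$ when unpacking the module-associativity diagrams; this is structurally routine but delicate. With $\mathcal{F}$ fully faithful and essentially surjective, its monoidal compatibility (intertwining the tensor products of $\sC$ and $\ub{\sSC}_A$) follows from $\ub{A}$ being a commutative algebra object in $\ub{\sSC}$ together with the general results of \cite{CKM}. This yields the tensor equivalence, and the bijection $\mathrm{Irr}(\sC) \simeq \mathrm{Irr}(\ub{\sSC}_A)$ is immediate.
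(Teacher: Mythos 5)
Your proposal is correct and follows essentially the same route as the paper: full faithfulness via Frobenius reciprocity, and essential surjectivity by using the module-associativity square over $\ub{J}\boxtimes(\ub{J}\boxtimes\bX)$ together with $J\boxtimes J\simeq\one$ to show that the odd component of the structure map is an isomorphism. Your split-epi/split-mono bookkeeping with $\alpha$ and $\beta$ is just a componentwise unpacking of the paper's argument that $m_{\bX}\colon\ub{J}\boxtimes\bX\to\bX$ is an isomorphism.
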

\proof
It suffices to show that $\mathcal{F}$ is fully faithful and essentially surjective.
The former follows from Frobenius reciprocity $\Frob_{\mathcal F(X), \mathcal F(Y)}$:
\[
\Hom_{\ub{\sSC}_A}(\mathcal F(X), \mathcal F(Y))
\simeq\Hom_{\ub{\sSC}}((X, 0), \ub{A} \boxtimes (Y, 0))
\simeq\Hom_{\sC}(X, Y). 
\]
Let us show the latter. Take an $A$-module $(\bX, m_\bX)$ in $\ub{\sSC}_A$. We show the canonical map 
$$\Frob_{\mathcal F(X), (\bX, m_\bX)}^{-1}(\id_{X_0})\colon \mathcal{F}(X_0)\rightarrow \bX$$
is an isomorphism, that is $m_\bX \colon J\boxtimes X_0\rightarrow X_1$ is an isomorphism.
Indeed, the associativity of the structure map $m_\bX: \ub{A} \boxtimes \bX \rightarrow \bX$ restricted to $\ub{J} \boxtimes (\ub{J} \boxtimes \bY)\subset \ub{A} \boxtimes (\ub{A} \boxtimes \bY)$ with $\ub{J} = (0, J)$ gives the following commutative diagram:
\begin{equation}
\vcenter{\xymatrix{ \ub{J} \boxtimes (\ub{J} \boxtimes \bX)  \ar[rr]^-{\mathcal A}_\simeq \ar[d]_{\id \boxtimes m_\bX}&& (\ub{J} \boxtimes \ub{J}) \boxtimes \bX \ar[rr]^-{\mu \boxtimes \id}_\simeq && \ub{\mathbf 1} \boxtimes \bX \ar[d]^{m_\bX}_\simeq    \\ \ub{J} \boxtimes \bX \ar[rrrr]_-{m_\bX }&&&& \bX.}}
\end{equation}
Hence $m_{\bX}\circ(\id\boxtimes m_{\bX})\colon\ub{J} \boxtimes (\ub{J} \boxtimes \bX)\to\bX$ is an isomorphism and so is $m_{\bX}\colon J\boxtimes \bX\to \bX$ 
(the injectivity follows from $J\boxtimes J \simeq \one$). 
Since $\bX=(X_0,X_1)$, it follows 
\begin{align*}
        m_\bX\colon J\boxtimes X_0\xrightarrow{\simeq} X_1,\quad J\boxtimes X_1\xrightarrow{\simeq} X_0
\end{align*}
as desired. 
\endproof

Here, we record a nice criterion regarding the category $\sC(\g, k)$ of ordinary modules over simple affine vertex algebras $L_k(\g)$.
Recall that a tensor category $\sC$ is said of moderate growth if each indecomposable object $X$ satisfies $\dim \End(X^{\boxtimes n}) < n!$ for some $n\in\Z_{>0}$, and called an $r$-category if for every object $X$, the functor $\Hom_\sC(\bullet \boxtimes X, \one)$ is represented by some object $X^*$, giving rise to an equivalence $\sC \xrightarrow{\simeq}\sC^{mop},\ X\mapsto X^*$, to the tensor and arrows opposite category $\sC^{mop}$, see \cite{EP} and references therein.

\begin{theorem} \label{thm: general theorems from literature}
\phantom{x}
\begin{enumerate}[left=\itemindent]
    \item \textup{\cite[Theorem 2.12]{ALSW}}
Let $V$ be a simple, self-dual vertex algebra with a conformal vector and $\sC$ be a vertex tensor category of $V$-modules. If $\sC$ is closed under the contragredient dual, then $\sC$ is an $r$-category. 
\item \textup{\cite[Corollary 1.3]{EP}} Every semisimple braided $r$-category of moderate growth is rigid. 
\end{enumerate}
\end{theorem}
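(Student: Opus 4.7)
The statement compiles two results from the literature, so my plan is to outline the strategy behind each part rather than reproduce their full technical apparatus.

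For part (1), following \cite{ALSW}, the key input is the Huang-Lepowsky identification of the Hom-space in the vertex tensor category with a space of intertwining operators: $\Hom_\sC(X \boxtimes Y, Z) \simeq I\binom{Z}{X\ Y}$. The contragredient pairing on $V$-modules induces canonical isomorphisms of intertwining-operator spaces of the form $I\binom{Z}{X\ Y} \simeq I\binom{X'}{Y\ Z'}$, built out of the natural pairing $X' \otimes X \to \C$ and its adjoints. Setting $Z = \one = V$ and using self-duality $V \simeq V'$ (hence $\one' \simeq \one$), this yields a natural isomorphism
$$\Hom_\sC(X \boxtimes Y, \one) \simeq \Hom_\sC(Y, X'),$$
which shows that $X^{*} := X'$ represents the functor $\Hom_\sC(- \boxtimes X, \one)$. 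Because the contragredient is an involutive contravariant endofunctor of $\sC$ that is compatible with $\boxtimes$ and fixes the unit, it extends to an equivalence $\sC \xrightarrow{\simeq} \sC^{mop}$, giving the $r$-category structure.

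For part (2), following \cite{EP}, the plan is to upgrade the existing right-dual structure to full rigidity. Given an object $X$ with right dual $X^{*}$ and evaluation/coevaluation $\mathrm{ev}_X, \mathrm{coev}_X$, the braiding supplies candidate left-duality maps
$$\widetilde{\mathrm{ev}}_X = \mathrm{ev}_X \circ \mathcal{B}_{X, X^{*}}, \qquad \widetilde{\mathrm{coev}}_X = \mathcal{B}_{X^{*}, X}^{-1} \circ \mathrm{coev}_X,$$
and the only thing to verify is that the corresponding zigzag endomorphism of $X$ is an isomorphism. Semisimplicity reduces this to a calculation on simple summands, where the zigzag is necessarily a scalar. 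The moderate growth hypothesis is then used to exclude vanishing of the scalar: if the zigzag vanished on some simple $X$, then iterated use of the existing right duals on tensor powers $X^{\boxtimes n}$ would produce morphism spaces of super-polynomial dimension in $n$, contradicting moderate growth. Hence each zigzag is a nonzero scalar and $(X^{*}, \widetilde{\mathrm{ev}}_X, \widetilde{\mathrm{coev}}_X)$ genuinely exhibits a left dual of $X$.

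The main obstacle lies in part (2). Part (1) is essentially bookkeeping once the Huang-Lepowsky framework and contragredient formalism of vertex tensor categories are in hand. By contrast, the Etingof-Penkov rigidity theorem rests on a delicate categorical dimension argument in which moderate growth plays the role that finite-dimensionality plays in fusion-categorical settings; pinning down the step that rules out degeneracy of the zigzag, without access to the usual finite-length tools, is the heart of the argument.
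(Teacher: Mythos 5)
This theorem is not proved in the paper: both parts are quoted directly from the literature (\cite[Theorem 2.12]{ALSW} and \cite[Corollary 1.3]{EP}), so there is no in-paper argument to compare your sketch against, and strictly speaking no proof was required of you here. Judged as a summary of the cited proofs, your account of part (1) is essentially correct: the Frenkel--Huang--Lepowsky symmetries of intertwining-operator spaces, combined with self-duality $V\simeq V'$, give a natural isomorphism $\Hom_{\sC}(Y\boxtimes X,\one)\simeq \Hom_{\sC}(Y,X')$, so the contragredient represents the relevant functor and furnishes the $r$-category (Grothendieck--Verdier) structure; this is the content of \cite{ALSW}.

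For part (2) there are two issues. First, a misattribution: the result is due to Etingof and Penneys, not ``Etingof--Penkov''. Second, and more substantively, your sketch begins from ``an object $X$ with right dual $X^*$ and evaluation/coevaluation $\mathrm{ev}_X,\mathrm{coev}_X$'' and reduces the problem to checking that a zigzag scalar is nonzero. But the $r$-category structure only supplies the duality functor $X\mapsto X^*$ and the evaluation morphism coming from representability of $\Hom_\sC(\bullet\boxtimes X,\one)$; the existence of a coevaluation satisfying the zigzag identities is precisely what rigidity asserts and is the thing to be proved. The heart of the Etingof--Penneys argument is therefore establishing non-degeneracy of the canonical pairing (equivalently, that no simple object is negligible), and this is where the moderate-growth bound $\dim\End(X^{\boxtimes n})<n!$ is played off against a lower bound on the growth of these endomorphism spaces. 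Your heuristic for how moderate growth enters is in the right spirit, but as written it presupposes the hardest piece of the duality data. Since the paper simply cites these results, the cleanest fix is to do the same rather than to reprove them.
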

\begin{corollary}\label{cor:FTC}
For a simple Lie (super)algebra $\g$, let $\sC(\g, k)$ be the category of ordinary $L_k(\g)$-modules at non-critical level $k$. If $\sC(\g, k)$ is finite and semisimple, then it is a vertex ribbon category.
\end{corollary}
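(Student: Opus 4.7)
The plan is to apply the two parts of Theorem~\ref{thm: general theorems from literature} in succession and then add a ribbon twist coming from the conformal vector. First I would verify the hypotheses of part~(1). The vertex algebra $L_k(\g)$ is simple by construction, and at non-critical level it is self-dual as a module over itself: the vacuum vector has conformal weight zero and generates a one-dimensional top space, so the contragredient $L_k(\g)^{\vee}$ is a simple $L_k(\g)$-module with the same one-dimensional top as $L_k(\g)$, forcing $L_k(\g)^{\vee}\simeq L_k(\g)$. Closure of $\sC(\g,k)$ under contragredient duals is immediate, because taking the contragredient preserves the defining properties of an ordinary module (local finiteness of $L_0$ and boundedness below of the conformal grading).

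The remaining hypothesis of part~(1) is that $\sC(\g,k)$ carries a vertex tensor category structure. I would obtain this by invoking the Huang--Lepowsky--Zhang framework, suitably extended to the $\Z/2$-graded setting when $\g$ is a superalgebra: under the assumption that $\sC(\g,k)$ is finite and semisimple, the required convergence and associativity axioms for intertwining operator algebras are known to hold, endowing $\sC(\g,k)$ with a vertex tensor category structure. Theorem~\ref{thm: general theorems from literature}(1) then gives that $\sC(\g,k)$ is an $r$-category.

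For part~(2), I need moderate growth. Since $\sC(\g,k)$ is finite and semisimple, each object $X$ decomposes into the finitely many simple objects, and the fusion rules produce multiplicities $m_i(n)$ of simples in $X^{\boxtimes n}$ that grow at most exponentially in $n$ (the largest eigenvalue of the fusion matrix of $X$ being a fixed real number). Hence $\dim\End(X^{\boxtimes n})=\sum_i m_i(n)^2$ also grows at most exponentially, which is dominated by $n!$ for large $n$, so the moderate growth bound $\dim\End(X^{\boxtimes n})<n!$ is satisfied. Theorem~\ref{thm: general theorems from literature}(2) then yields rigidity. Finally, the conformal vector of $L_k(\g)$ provides a canonical balanced twist $\theta=e^{2\pi i L_0}$, which together with the braiding and the now-established rigidity upgrades $\sC(\g,k)$ to a ribbon category, and hence to a vertex ribbon category.

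The main obstacle I expect is establishing the vertex tensor category structure from only the finite/semisimple hypothesis; this ultimately rests on the extension of HLZ theory to the affine Lie (super)algebra setting and is the only step that is not essentially formal. Self-duality, closure under contragredient, moderate growth, and the compatibility of the $L_0$-twist with the rigid structure are all either immediate or direct consequences of the cited general results.
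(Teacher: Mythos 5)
Your proposal is correct and follows essentially the same route as the paper: obtain the vertex tensor category structure from semisimplicity (the paper cites \cite[Corollary 3.4.5]{CY} for this step, which you identify as the only non-formal ingredient), apply Theorem~\ref{thm: general theorems from literature}(1) to get an $r$-category, verify moderate growth from finiteness and the fusion rules via Schur's lemma, invoke Theorem~\ref{thm: general theorems from literature}(2) for rigidity, and conclude with the balancing $e^{2\pi i L_0}$. Your exponential bound on $\dim\End(X^{\boxtimes n})$ via the fusion matrix is the same estimate the paper makes explicit as $m^{n-1}<n!$.
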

\begin{proof}
By assumption, $\sC(\g, k)$ is semisimple and thus forms a vertex tensor category by \cite[Corollary 3.4.5]{CY}. Thanks to the general property of vertex tensor categories, $\sC(\g, k)$ is a balanced braided tensor category with twist given by $e^{2\pi i L_0}$ where $L_0$ is the zero-mode of the conformal vector. Thus, it suffices to show that $\sC(\g, k)$ is rigid. It is clear that $L_k(\g)$ and $\sC(\g, k)$ satisfy the assumptions of Theorem \ref{thm: general theorems from literature} (1), $\sC(\g, k)$ is an $r$-category and, thus, it suffices to show that $\sC(\g, k)$ is of moderate growth by Theorem \ref{thm: general theorems from literature} (2). 

As $\sC(\g, k)$ is finite, let $M_1, \dots, M_r$ be the complete set of simple ordinary $L_k(\g)$-modules and $N_{i, j}^{\ \ \ell}$ be the fusion rules, \ie\
    \[
    M_i \boxtimes M_j \simeq \bigoplus_{\ell =1}^r N_{i, j}^{\ \ \ell} M_\ell.
    \]
    Set $m =r \cdot \text{max}\{ (N_{i, j}^{\ \ \ell})^2 | 1 \leq i, j, \ell \leq r \}$. By Schur's Lemma, it follows that $\dim\End(M_i \boxtimes M_j) = \sum_{\ell = 1}^r (N_{i, j}^{\ \ \ell})^2 \leq m$. By induction, we find that $\dim\End(M_{i_1} \otimes \dots \otimes M_{i_n})\leq m^{n-1}$. Since $m^{n-1}< n!$ holds for large enough $n$, one has 
    $\dim\End(M_i^{\otimes n}) < n!$ for some $n$, \ie\ $M_i$ is of moderate growth. This completes the proof.
\end{proof}

\end{document}